\newtheorem{Theorem}{Theorem}
\newtheorem*{Theorem*}{Theorem}
\newtheorem{Lemma}{Lemma}
\newtheorem{case}{Case}
\theoremstyle{definition}
\def\OO{{\mathcal O}}
\def\O{{\mathrm{O}}}
\def\SO{\mathrm{SO}}
\def\PO{\mathrm{PO}}
\def\mod{{\mathrm{mod}}}
\def\Diff{\mbox{Diff}}
\def\Isom{\mbox{Isom}}
\def\Isompr{\mbox{Isom}\!^+\!}
\def\Out{\mbox{Out}}
\def\Aut{\mbox{Aut}}
\def\Inn{\mbox{Inn}}
\def\Norm{\mbox{Norm}}
\def\C{{\mathbb C}}
\def\R{{\mathbb R}}
\def\H{{\mathbb H}}
\def\Z{{\mathbb Z}}
\begin{document}

\title[Isometry groups and mapping class groups of spherical 3-orbifolds]{Isometry groups and mapping class groups\\of spherical 3-orbifolds}

\author[M. Mecchia]{Mattia Mecchia*}
\address{M. Mecchia: Dipartimento Di Matematica e Geoscienze, Universit\`{a} degli Studi di Trieste, Via Valerio 12/1, 34127, Trieste, Italy.} \email{mmecchia@units.it}
\author[A. Seppi]{Andrea Seppi**}
\address{A. Seppi: CNRS and Universit\'e Grenoble Alpes, 100 Rue des Math\'ematiques, 38610 Gi\`eres, France.} \email{andrea.seppi@univ-grenoble-alpes.fr}

\thanks{$^{*}$Partially supported by the FRA 2015 grant  ``Geometria e topologia delle variet\`{a} ed applicazioni'', Universit\`{a} degli Studi di Trieste and by the ``National Group for Algebraic and Geometric Structures, and their Applications'' (GNSAGA€" INdAM)}

\thanks{$^{**}$Partially supported by the FIRB 2011-2014 grant ``Geometry and topology of low-dimensional manifolds''  and by the ``National Group for Algebraic and Geometric Structures, and their Applications'' (GNSAGA€" INdAM)}


\begin{abstract}
We study the isometry groups of compact spherical orientable $3$-orbifolds $S^3/G$, where $G$ is a finite subgroup of $\mathrm{SO}(4)$, by determining their isomorphism type. Moreover, we prove that the inclusion of $\mbox{Isom}(S^3/G)$ into $\mbox{Diff}(S^3/G)$ induces an isomorphism of the $\pi_0$ groups, thus proving the $\pi_0$-part of the natural generalization of the Smale Conjecture to spherical $3$-orbifolds.
\end{abstract}

\maketitle
  
\section{Introduction} 

Orbifolds are a generalization of manifolds, which had been introduced in different contexts by Satake \cite{satake}, by Thurston \cite[Chapter 13]{thurston2} and by Haefliger \cite{haefliger} -- useful references being also \cite{boileau-maillot-porti, choi, Dun2, scott}. The most standard example of an orbifold (of dimension $n$) is the quotient of a manifold $M^n$ by a group $\Gamma$ which acts properly discontinuously -- but in general not freely -- on $M$. If the action is not free, \emph{singular points} appear in the quotient $M/\Gamma$, keeping track of the action of point stabilizers $\mathrm{Stab}_\Gamma(x)$ on a neighborhood of a fixed point $x\in M$. More generally, an orbifold is \emph{locally} the quotient of a manifold by the action of a finite group.

\emph{Geometric} $3$-\emph{orbifolds} had a large importance in Thurston's geometrization program. These are \emph{locally} the quotient of one of the eight Thurston's model geometries by the properly discontinuous action of a group of isometries. The main object of this paper are compact \emph{spherical} $3$-orbifolds, which are \emph{globally} the quotient of the $3$-sphere $S^3$ by the action of a finite group $G$ of isometries. Hence the quotient orbifold  inherits a metric structure (which is a Riemannian metric of constant curvature $1$ outside the singularities). The main purpose of this paper is to study the group of \emph{isometries} of compact spherical $3$-orbifolds $\mathcal O=S^3/G$, for $G$ a finite subgroup of $\SO(4)$. Roughly speaking, an isometry of $\mathcal O$ is a diffeomorphism which preserves both the induced metric and the type of singularities.

A widely studied problem concerning the isometry group of $3$-manifolds is the \emph{Smale Conjecture}, and its stronger version, called \emph{Generalized Smale Conjecture}. The latter asserts that the natural inclusion of $\Isom(M)$, the group of isometries of a compact spherical $3$-manifold $M$, into $\Diff(M)$ (its group of diffeomorphisms) is a homotopy equivalence. The original version was stated for $M=S^3$ by Smale. The $\pi_0$-\emph{part} of the original conjecture, namely the fact that the natural inclusion induces a bijection on the sets of path components, was proved by Cerf in \cite{cerf}. The full conjecture was then proved by Hatcher in \cite{hatcher}. The Generalized Smale Conjecture for spherical $3$-manifolds was proven in many cases, but is still open in full generality \cite{MR2976322}. The $\pi_0$-{part} was instead proved in \cite{mccullough}. We will prove the $\pi_0$-{part} of the analogous statement for spherical $3$-orbifolds, namely:

\begin{Theorem*}[$\pi_0$-part of the Generalized Smale Conjecture for spherical 3-orbifolds]
Let $\OO=S^3/G$ be a compact spherical oriented orbifold. The inclusion $\Isom(\OO)\rightarrow\Diff(\OO)$ induces a group isomorphism
$$\pi_0 \Isom(\OO)\cong \pi_0 \Diff(\OO)\,.$$
\end{Theorem*}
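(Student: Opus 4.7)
My plan is to lift the problem to the universal orbifold cover $S^3$. Every self-diffeomorphism $f$ of $\OO$ admits a lift to a diffeomorphism $\tilde f$ of $S^3$ which normalizes $G$ in $\Diff(S^3)$, and the lift is unique up to composition with an element of $G$; the analogous statement holds for isometries, with $\O(4)$ in place of $\Diff(S^3)$. This gives natural identifications
$$\Diff(\OO) \cong N_{\Diff(S^3)}(G)/G, \qquad \Isom(\OO) \cong N_{\O(4)}(G)/G.$$
Since $G \subset \SO(4)$ lies in the identity components of both $\O(4)$ and $\Diff(S^3)$, quotienting by $G$ does not affect $\pi_0$, and the theorem reduces to showing that the inclusion $N_{\O(4)}(G) \hookrightarrow N_{\Diff(S^3)}(G)$ is a bijection on $\pi_0$.

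Conjugation on $G$ yields, for each $\ast \in \{\O(4),\,\Diff(S^3)\}$, a short exact sequence
$$1 \longrightarrow C_\ast(G) \longrightarrow N_\ast(G) \longrightarrow I_\ast \longrightarrow 1,$$
where $C_\ast(G)$ is the centralizer of $G$ in $\ast$ and $I_\ast \subseteq \Aut(G)$ is the image. Since $\Aut(G)$ is discrete, applying $\pi_0$ produces short exact sequences of groups
$$1 \to \pi_0 C_\ast(G) \to \pi_0 N_\ast(G) \to I_\ast \to 1,$$
and by the short five lemma the desired $\pi_0$ bijection splits into two independent claims: the inclusion $C_{\O(4)}(G) \hookrightarrow C_{\Diff(S^3)}(G)$ induces a bijection on $\pi_0$, and the subgroups $I_{\O(4)}$ and $I_{\Diff(S^3)}$ of $\Aut(G)$ coincide.

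For the first claim I would invoke the equivariant Smale conjecture for finite subgroups of $\O(4)$ acting on $S^3$, proved by Dinkelbach and Leeb via equivariant Ricci flow with surgery. Their result gives the stronger fact that $C_{\O(4)}(G) \hookrightarrow C_{\Diff(S^3)}(G)$ is a homotopy equivalence, hence in particular a $\pi_0$-isomorphism.

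The second claim is the heart of the argument. Given $\tilde f \in N_{\Diff(S^3)}(G)$ inducing an automorphism $\alpha$ of $G$, one must exhibit an element of $N_{\O(4)}(G)$ realizing the same $\alpha$. I would proceed by comparing two explicit descriptions: on one side, $I_{\O(4)}$ is computed from the classification of the finite subgroups of $\SO(4)$ and their $\O(4)$-normalizers, developed in the earlier parts of the paper; on the other side, $I_{\Diff(S^3)}$ is constrained by the fact that any self-diffeomorphism of $\OO$ must preserve the combinatorial type of the singular set and, in the Seifert fibered case, must respect the combinatorics of the induced Hopf-type fibrations studied in the paper. Matching these two descriptions case by case, following the classification of $G$, should close the argument. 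I expect this case analysis to be the main obstacle, both because of the many families of $G$ in $\SO(4)$ and because the relevant combinatorial constraints must be verified individually in each family.
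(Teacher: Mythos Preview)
Your decomposition via the short exact sequences $1\to C_\ast(G)\to N_\ast(G)\to I_\ast\to 1$ is correct and natural, but Claim~1 has a genuine gap: Dinkelbach--Leeb do \emph{not} prove the equivariant Smale conjecture. Their result is that any smooth finite group action on an elliptic $3$-manifold is smoothly conjugate to an isometric one; this is a statement about the set of conjugacy classes of $G$-actions, not about the homotopy type (or even $\pi_0$) of the equivariant diffeomorphism group $C_{\Diff(S^3)}(G)=\Diff^G(S^3)$ for a \emph{fixed} action. The assertion that $C_{\O(4)}(G)\hookrightarrow C_{\Diff(S^3)}(G)$ is a $\pi_0$-isomorphism is, after quotienting by $Z(G)$, exactly the statement that every diffeomorphism of $\OO$ inducing the trivial outer automorphism of $G$ is isotopic to an isometry with the same property. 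That is essentially the injectivity half of the theorem you are trying to prove, so invoking it as an input is circular.

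The paper's argument avoids this by working downstairs rather than upstairs. Surjectivity of $\iota$ comes from \cite{cuccagna-zimmermann} combined with orbifold geometrization (this is where Dinkelbach--Leeb actually enters: to make the finite group $H\subset\Diff(\OO)$ act by isometries). For injectivity the paper distinguishes cases according to the geometry of the complement $M$ of the singular locus: if $M$ is hyperbolic, \cite{cuccagna-zimmermann} gives injectivity directly; if $M$ is Seifert fibered, Lemma~\ref{Seifert-complement} pins $G$ down to Families 1--9 and 34, and then the explicit computation of $\Isompr(S^3/G)$ in Tables~\ref{tableisometry} and \ref{tableisometryexceptions} shows that $\alpha\circ\iota:\pi_0\Isompr(\OO)\to\Out(G)$ is injective in those cases. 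Your Claim~2 is close in spirit to this last step, but the real work --- reducing to a manageable list of $G$ and then checking injectivity into $\Out(G)$ there --- is precisely what your proposal leaves unspecified, and Claim~1 cannot be used to bypass it.
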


The proof uses both the algebraic description of the finite groups $G$ acting on $S^3$ by isometries, and the geometric properties of \emph{Seifert fibrations for orbifolds}. In the specific case  of spherical 3-orbifolds whose  underlying topological space is the 3-sphere and whose singular set is a  Montesinos link, the same  result was  proved in \cite{sakuma}.

In fact, the classification of spherical $3$-orbifolds up to orientation-preserving isometries is equivalent to the classification of finite subgroups of $\SO(4)$ up to conjugacy in $\SO(4)$. Such algebraic classification was first given by
Seifert and Threlfall in \cite{threlfall-seifert} and \cite{threlfall-seifert2}; we will use the approach of  \cite{duval}.
In the spirit of the paper \cite{mccullough}, we provide an algebraic description of the isometry group of the spherical orbifold $S^3/G$ once the finite subgroup $G<\SO(4)$ is given. 
To perform the computation, we first understand the group of orientation-preserving isometries, which is isomorphic to the quotient of the normalizer of $G$ in $\SO(4)$ by $G$ itself. We then describe the full group of isometries, when $S^3/G$ has orientation-reversing isometries. Only part of the understanding of $\Isom(S^3/G)$ is indeed necessary for the proof of the $\pi_0$-{part} of the Generalized Smale Conjecture for orbifolds, but to the opinion of the authors it is worthwhile to report the isomorphism type of the isometry group for every spherical orbifold, as such list is not available in the literature. Moroever, by the main theorem above, this gives also the computation for the mapping class group of spherical orbifolds.

To prove that the homomorphism $\iota:\pi_0 \Isom(\OO)\to\pi_0 \Diff(\OO)$ induced by the inclusion $\Isom(\OO)\rightarrow\Diff(\OO)$ is an isomorphism, we will in fact prove that the composition 
\[
\xymatrix{
\pi_0\Isompr(\OO) \ar[r]^-{\iota} & \pi_0\Diff^+\!(\OO) \ar[r]^-{\alpha} & \Out(G)\,,
}
\]
is injective, where $\Out(G)=\Aut(G)/\Inn(G)$. 
This implies that $\iota$ is injective, while surjectivity follows from \cite{cuccagna-zimmermann}. However, the injectivity of $\alpha\circ \iota$ does not hold in general, but will be proved when the singular locus of $\OO$ is nonempty and its complement is a Seifert fibered manifold, while the remaining cases were already treated in \cite{mccullough} and \cite{cuccagna-zimmermann}. In order to detect which orbifolds have this property and to prove injectivity in those cases, it is necessary to analyze the Seifert fibrations which a spherical orbifolds $S^3/G$ may admit, in the general setting of Seifert fibrations for orbifolds.   
The methods to obtain such analysis were provided in \cite{mecchia-seppi}. 


\subsection*{Organization of the paper} In Section \ref{sec spherical 3-orbifolds}, we explain the algebraic classification of finite subgroups of $\SO(4)$ up to conjugacy, and we give an introduction of two- and three-dimensional orbifolds, with special attention to the spherical case. In Section \ref{sec isometry group}, we compute the isometry group of spherical $3$-orbifolds, by first computing the subgroup of orientation-preserving isometries and then the full isometry group. The results are reported in Tables \ref{tableisometry}, \ref{tableisometryexceptions} and \ref{tableisometryreversing}.   Finally, in Section \ref{sec gen smale} we prove the $\pi_0$-part of the Generalized Smale Conjecture for spherical orbifolds, {{after introducing the necessary background on Seifert fibrations.}}

\subsection*{Acknowledgements} {{We would like to thank an anonymous referee for several advices which improved the exposition of the paper.}}

\section{Spherical three-orbifolds} \label{sec spherical 3-orbifolds}

Let    $\H=\{a+bi+cj+dk\,|\,a,b,c,d\in\R\}=\{z_1+z_2j\,|\,z_1,z_2\in\C\}$ be the quaternion algebra. Given a quaternion $q=z_1+z_2 j$, we denote by $\bar q=\bar z_1-z_2 j$ its conjugate. Thus $\H$ endowed with the positive definite quadratic form given by $q\bar q=|z_1|^2+|z_2|^2$, is isometric to the standard scalar product on $\R^4$.
We will consider  the round 3-sphere  as the set of unit quaternions:

\begin{equation*}
S^3=\{a+bi+cj+dk \,|\, a^2+b^2+c^2+d^2=1\}=\{z_1+z_2j\, \,|\, |z_1|^2+|z_2|^2=1\}\,.
\end{equation*}
The restriction of the product of $\mathbb{H}$ induces a group structure on $S^3$. For $q\in S^3$, $q^{-1}=\bar q$.

\subsection{Finite subgroups of $\SO(4)$} \label{subsec finite subgroups}
In this subsection we present  the classification of the finite subgroup of $\SO(4)$, which is originally due to Seifert and Threlfall (\cite{threlfall-seifert} and \cite{threlfall-seifert2}). More details can be found in \cite{duval,conway-smith,mecchia-seppi}; we follow the approach and the notation of \cite{duval}. We have to mention that in Du Val's list of  finite subgroup of $\SO(4)$ there are three missing cases.

Let us consider the group homomorphism $$\Phi:S^3\times S^3\rightarrow \SO(4)\,$$ which associates to the pair $(p,q)\in S^3\times S^3$ the map $\Phi_{p,q}:\H \rightarrow \H$  with 
$$\Phi_{p,q}(h)=phq^{-1}\,,$$ which is an isometry of $S^3$. The homomorphism $\Phi$ can be proved to be surjective and has kernel $$\mathrm{Ker}(\Phi)=\{(1,1),\,(-1,-1)\}\,.$$
Therefore $\Phi$ gives a  1-1 correspondence between  finite subgroups of $\SO(4)$ and finite subgroups of $S^3\times S^3$ containing the kernel of $\Phi$. 
Moreover, if two subgroups are conjugate in $\SO(4)$, then the corresponding groups in $S^3\times S^3$ are conjugate and vice versa.   
To give a classification  of finite subgroups of $\SO(4)$ up to conjugation, one can thus classify the subgroups of $S^3\times S^3$ which contain $\{(1,1),\,(-1,-1)\}$, up to conjugation in $S^3\times S^3$. 

Let  $\tilde G$ be  a finite subgroup of $S^3\times S^3$ and let us denote by  $\pi_i:S^3\times S^3 \rightarrow S^3$, with $i=1,2$, the two projections. We use the following notations:  $L=\pi_1(\tilde G)$, $L_K=\pi_1((S^3\times\{1\})\cap \tilde G)$, $R=\pi_2(\tilde G)$, $R_K=\pi_2((\{1\}\times S^3)\cap \tilde G)$. The projection $\pi_1$ induces an isomorphism   $$\bar{\pi}_1: \tilde G/(L_K\times R_K)\rightarrow L/L_K\,,$$ 
and $\pi_2$ induces an isomorphism   $$\bar{\pi}_2: \tilde G/(L_K\times R_K)\rightarrow R/R_K\,.$$ 
Let us denote by $\phi_{\tilde G}:L/L_K\to R/R_K$ the isomorphism  
$$\phi_{\tilde G}=\bar{\pi}_1^{-1}\circ \bar{\pi}_2\,.$$ 
On the other hand, if we consider  two finite subgroups $L$ and $R$ of $S^3$, with two normal subgroups $L_K$ and $R_K$ such that there exists an isomorphism $\phi:L/L_K\rightarrow R/R_K$, we can define a subgroup $\tilde G$ of $S^3\times S^3$ such that $L=\pi_1(\tilde G)$, $L_K=\pi_1((S^3\times\{1\})\cap \tilde G)$, $R=\pi_2(\tilde G)$, $R_K=\pi_2((\{1\}\times S^3)\cap \tilde G)$ and $\phi=\phi_{\tilde G}$.
The subgroup $\tilde G$ of $S^3\times S^3$ is determined uniquely by the 5-tuple $(L,L_K,R,R_K,\phi)$.

To consider the classification up to conjugacy, one uses the following straightforward lemma, which is implicitly used in \cite{duval}.


\begin{Lemma}\label{classificationS3}
Let $\tilde G=(L,L_K,R,R_K,\phi)$ and $\tilde G'=(L',L'_K,R',R'_K,\phi')$ be finite subgroups of $S^3\times S^3$ containing $\mathrm{Ker}(\Phi)$. An element $(g,f)\in S^3\times S^3 $ conjugates $\tilde G$ to $\tilde G'$ if and only if the following three conditions are satisfied:
\begin{enumerate}
\item $g^{-1}Lg=L'$ and $f^{-1}Rf=R'$;
\item $g^{-1}L_Kg=L_K'$ and $f^{-1}R_Kf=R_K'$;
\item the following diagram commutes: 

\begin{equation} 
\begin{gathered}
\xymatrix{
L/L_{K} \ar[d]^-{\alpha}\ar[r]^-{\phi} & R/R_{K} \ar[d]^-{\beta}\\
L'/L'_{K}   \ar[r]^-{\phi'} & R'/R'_{K} \\
}
\end{gathered}
\end{equation}
where $\alpha(xL_K)=g^{-1}xgL'_K$ and   $\beta(yR_K)=f^{-1}yfR'_K$. 
\end{enumerate}
   
\end{Lemma}

Observe that the diagonal subgroup $\Delta$ in $S^3\times S^3$ is the subgroup which preserves the antipodal points $1$ and $-1$, and thus also preserves the equatorial $S^2$ which is equidistant from $1$ and $-1$. Thus one obtains a map
$$\Phi:\Delta\cong S^3\to \SO(3)$$
which associates to $q\in S^3$ the isometry $h\mapsto qhq^{-1}$. By means of this map, and the classification of finite subgroups of $\SO(3)$ one shows that the finite subgroups of $S^3$ are:

$$
\begin{array}  {rll}
C_n=& \{\cos\left(\frac{2\alpha\pi}{n}\right)+i\sin\left(\frac{2\alpha\pi}{n}\right) \,|\,\alpha=0,\dots,n-1\} & n\geq 1  \\[5pt]
D^*_{2n}=& C_n\cup C_n j & n\geq 3 \\[5pt]
T^*=&\bigcup\limits_{r=0}^2 (\frac{1}{2}+\frac{1}{2}i+\frac{1}{2}j+\frac{1}{2}k)^r D^*_{4} &  \\[5pt]
O^*=&T^*\cup (\sqrt{\frac{1}{2}}+\sqrt{\frac{1}{2}}j) T^* &  \\[5pt]
I^*=&\bigcup\limits_{r=0}^4 \left(\frac{1}{2}\tau^{-1}+\frac{1}{2}\tau j+\frac{1}{2} k\right)^r T^* \quad (\text{where } \tau=\frac{\sqrt{5}+1)}{2}) &  \\[5pt]
\end{array}
$$

The group $C_n$ is cyclic of order $n$, and contains the center $-1$ if and only if $n$ is even.
The group $D_{2n}^*$ is a generalized quaternion group of order $2n$. The group $D_{2n}^*$ is called also binary dihedral and it is a central extension of the dihedral group by a group of order 2. Observe that for $n=2$, one has $D_{4}^ *=\{\pm 1,\pm j\}$, which is conjugate to $C_4=\{\pm 1,\pm i\}$. For this reason, the groups $D_{2n}^*$ are considered with indices $n\geq 3$. The case $n=3$ is also a well-known group, in fact $D_8^ *=\{\pm 1,\pm i,\pm j,\pm k\}$ is also called quaternion group. The groups $T^*$, $O^*$ and $I^*$ are central extensions of the  tetrahedral, octahedral and icosahedral group, respectively, by a group of order two; they are called  binary tetrahedral, octahedral and icosahedral, respectively.

Using  Lemma~\ref{classificationS3}, one can thus obtain a classification (up to conjugation) of the finite subgroups  of $S^3\times S^3$ which contain $\mathrm{Ker}(\Phi)$, in terms of  the finite subgroups of $S^3$. We report the classification in  Table~\ref{subgroup}. 


For most cases  the group is completely determined up to conjugacy  by the first four data in the 5-tuple  $(L,L_K,R,R_K,\phi)$ and any possible  isomorphism $\phi$  gives  the same group up to  conjugacy. So  we use   Du Val's notation where the group $(L,L_K,R,R_K,\phi)$ is denoted by $(L/L_K, R/R_K)$, using a subscript  only when the isomorphism has to be specified.   
This is the case for Families $1,\, 1^\prime,\,11,\, 11^\prime,\, 26^\prime,\, 26^{\prime\prime},\,31,\, 31^\prime,\,32,\, 32^\prime,\,33$ and  $33^\prime$. 

Recalling that $\phi$ is an isomorphism  from $L/L_K$ to $R/R_K$, in the group  $(C_{2mr}/C_{2m},C_{2nr}/C_{2n})_s$ the isomorphism is
 $$\phi_s:(\cos(\pi/mr)+i \sin(\pi/mr))C_{2m}\mapsto(\cos(s\pi/nr)+i \sin(s\pi/nr))C_{2n}\,.$$ In the group $(C_{mr}/C_{m},C_{nr}/C_{n})_s$ the situation is similar and the isomorphism is $$\phi_s:(\cos(2\pi/mr)+i \sin(2\pi/mr))C_{m}\mapsto(\cos(2s\pi/nr)+i \sin(2s\pi/nr))C_{n}\,.$$

For Families 11 and $11^\prime$ we extend the isomorphisms $\phi_s$ to dihedral or binary dihedral groups sending simply $j$ to $j$.
If $L=D^*_{4mr}$, $R=D^*_{4nr}$, $L_K=C_{2m}$ and $R_K=C_{2n}$, then these isomorphisms cover all the possible cases except when $r=2$ and $m,n>1$. 
In this case we have to consider another isomorphism $f:D^*_{4mr}/C_{2m}\rightarrow D^*_{4nr}/C_{2n}$ such that: 

$$ f:\begin{cases} (\cos(\pi/2m)+i \sin(\pi/2m) )C_ {2m}\mapsto jC_{2n} \\
j C_ {2m}\mapsto(\cos(\pi/2n)+i \sin(\pi/2n))C_{2n}\end{cases}\,. $$

This is due to the fact that, if $r>2$, the  quotients $L/L_K$ and $R/R_K$ are isomorphic to a dihedral group of order greater then four where  the index two cyclic subgroup is characteristic, while if $r=2$ the quotients are dihedral groups of order four and extra isomorphisms appear.
The isomorphism $f$ gives another class of groups (the number 33 in our 
list), this family is one of the missing case in Du Val's list. However, when $m=2$ or $n=2$, one has $L=D_8^ *$ (or $R=D_8^ *$), and it is possible to conjugate $jC_2=\{\pm j\}$ to $iC_2=\{\pm i\}$ in $S^ 3$ (for instance by means of $(i+j)/\sqrt{2}$). Therefore for $m=1$ or $n=1$, the isomorphism $f$ is equivalent to the trivial isomorphism.

In Family $11^{\prime}$ the behaviour is similar. In fact if $r>2$ the isomorphisms $\phi_s$ give all the possible groups up to conjugacy, if $r=2$ and $m,n>1$ the quotients are quaternion groups of order 8 and a further  family has to be considered. This is the second missing case in \cite{duval} and Family $33^{\prime}$ in  our list where  $f$ is the following isomorphism:

$$ f:\begin{cases} (\cos(\pi/m)+i \sin(\pi/m) )C_ {m}\mapsto jC_{n} \\
j C_ {m}\mapsto(\cos(\pi/n)+i \sin(\pi/n))C_{n}\end{cases}\,. $$

The third family of groups not in  Du Val's list is Family 34 in Table~\ref{subgroup}. Note that $D^*_{4n}/C_{n}$ is cyclic of order 4 if and only if $n$ is odd. If $m$ is even while $n$ is odd, then $(C_{4m}/C_m,D^*_{4n}/C_n)$ does not contain  the kernel of $\Phi$, but if $m$ is odd, a new family appears.

The other groups in the list defined by a  non trivial automorphism between $L/L_K$ and $R/R_K$  are the groups $26''$, $32$ and $32'.$
In the first case $f$ is the identity on the subgroup $T^*$ and maps $x$ to $-x$ in the complement $O^*\setminus L^*.$ For the group $32$ (resp. $32'$) the automorphism $f$ can be chosen between the automorphism of $I^*/C_2$ (resp. $I^*$) that are not inner (see \cite[page 124]{dunbar}), in particular we choose $f$ of order two; this choice  turns out to be useful when we compute the full isometry group in Subsection \ref{o.r. isometries}.

Finally we remark that the groups  $(L,L_K,R,R_K,\phi)$ and $(R,R_K,L,L_K,\phi^{-1})$ are not conjugate unless $L$ and $R$ are conjugate in $S^3$, so the corresponding groups in $\SO(4)$ are in general not conjugate in $\SO(4)$. 
If we consider conjugation in  $\O(4)$ the situation changes, because the orientation-reversing isometry of $S^3$, sending each quaternion $z_1+z_2 j$ to its inverse $\overline{z_1}-z_2j$, conjugates the two subgroups of $\SO(4)$ corresponding to $(L,L_K,R,R_K,\phi)$ and $(R,R_K,L,L_K,\phi^{-1})$. For this reason, in Table~\ref{subgroup} only one family between  $(L,L_K,R,R_K,\phi)$ and $(R,R_K,L,L_K,\phi^{-1})$ is listed.





\begin{table}
\begin{tabular}{|l|c|c|c|}
\hline
 & $\tilde G$ & order of $G$ & \\
\hline
 1. & $(C_{2mr}/C_{2m},C_{2nr}/C_{2n})_s$ & $2mnr$ & $\gcd(s,r)=1$ \\  
 $1^{\prime}$. & $(C_{mr}/C_{m},C_{nr}/C_{n})_s$ & $(mnr)/2$ & $\gcd(s,r)=1$ $\gcd(2,n)=1$ \\
&&& $\gcd(2,m)=1$  $\gcd(2,r)=2$\\
 2. & $(C_{2m}/C_{2m},D^*_{4n}/D^*_{4n})$ & $4mn$ &  \\ 
 3. & $(C_{4m}/C_{2m},D^*_{4n}/C_{2n})$ & $4mn$ &   \\ 
 4. & $(C_{4m}/C_{2m},D^*_{8n}/D^*_{4n})$ & $8mn$ &   \\ 
 5. & $(C_{2m}/C_{2m},T^*/T^*)$ & $24m$ &   \\
 6. & $(C_{6m}/C_{2m},T^*/D^*_{8})$ & $24m$ &   \\ 
 7. & $(C_{2m}/C_{2m},O^*/O^*)$ & $48m$ &   \\
 8. & $(C_{4m}/C_{2m},O^*/T^*)$ & $48m$ &   \\ 
 9. & $(C_{2m}/C_{2m},I^*/I^*)$ & $120m$ &   \\ 
 10. & $(D^*_{4m}/D^*_{4m},D^*_{4n}/D^*_{4n})$ & $8mn$ &   \\
 11. & $(D^*_{4mr}/C_{2m},D^*_{4nr}/C_{2n})_s$ & $4mnr$ & $\gcd(s,r)=1$ \\ 
 $11^{\prime}$. & $(D^*_{2mr}/C_{m},D^*_{2nr}/C_{n})_s$ & $mnr$ &  $\gcd(s,r)=1$ $\gcd(2,n)=1$  \\
&&&  $\gcd(2,m)=1$ $\gcd(2,r)=2$\\
 12. &  $(D^*_{8m}/D^*_{4m},D^*_{8n}/D^*_{4n})$ & $16mn$ & \\
 13. &  $(D^*_{8m}/D^*_{4m},D^*_{4n}/C_{2n})$ & $8mn$ & \\
 14. &  $(D^*_{4m}/D^*_{4m},T^*/T^*)$ & $48m$ & \\
 15. &  $(D^*_{4m}/D^*_{4m},O^*/O^*)$ & $96m$ & \\
16. &  $(D^*_{4m}/C_{2m},O^*/T^*)$ & $48m$ & \\
17. &  $(D^*_{8m}/D^*_{4m},O^*/T^*)$ & $96m$ & \\
18. & $(D^*_{12m}/C_{2m},O^*/D^*_{8})$ & $48m$ & \\
19. & $(D^*_{4m}/D^*_{4m},I^*/I^*)$ & $240m$ & \\
20. & $(T^*/T^*,T^*/T^*)$ & $288$ & \\
21. & $(T^*/C_2,T^*/C_2)$ & $24$ & \\
$21^{\prime}.$ & $(T^*/C_1,T^*/C_1)$ & $12$ & \\
22. & $(T^*/D^*_{8},T^*/D^*_{8})$ & $96$ & \\
23. & $(T^*/T^*,O^*/O^*)$ & $576$ & \\
24. & $(T^*/T^*,I^*/I^*)$ & $1440$ & \\
25. & $(O^*/O^*,O^*/O^*)$ & $1152$ & \\
26. & $(O^*/C_2,O^*/C_2)$ & $48$ & \\
$26^{\prime}.$ & $(O^*/C_1,O^*/C_1)_{Id}$ & $24$ & \\
$26^{\prime\prime}.$ & $(O^*/C_1,O^*/C_1)_f$ & $24$ & \\
27. & $(O^*/D^*_{8},O^*/D^*_{8})$ & $192$ & \\
28. & $(O^*/T^*,O^*/T^*)$ & $576$ & \\
29. & $(O^*/O^*,I^*/I^*)$ & $2880$ & \\
30. &   $(I^*/I^*,I^*/I^*)$ & $7200$ & \\
31. &   $(I^*/C_2,I^*/C_2)_{Id}$ & $120$ & \\
$31^{\prime}.$ & $(I^*/C_1,I^*/C_1)_{Id}$ & $60$ & \\
32. &   $(I^*/C_2,I^*/C_2)_{f}$ & $120$ & \\
$32^{\prime}.$ & $(I^*/C_1,I^*/C_1)_f$ & $60$ & \\
33. & $(D^*_{8m}/C_{2m},D^*_{8n}/C_{2n})_f$ & $8mn$ &   $m\neq 1$  $n\neq 1$. \\ 
 $33^{\prime}$. & $(D^*_{8m}/C_{m},D^*_{8n}/C_{n})_f$ & $4mn$ &  $\gcd(2,n)=1 \gcd(2,m)=1$ \\
&&& $m\neq 1$ and $n\neq 1$.   \\
34. &  $(C_{4m}/C_{m},D^*_{4n}/C_{n})$ & $2mn$ & $\gcd(2,n)=1 \gcd(2,m)=1$ \\
\hline
\end{tabular}
\bigskip
\caption{Finite subgroups of $\SO(4)$}
\label{subgroup}
\end{table}

\subsection{Two and three-dimensional orbifolds}


Roughly speaking an orbifold $\mathcal{O}$ of dimension $n$ is a paracompact Hausdorff topological space $X$ together with an atlas of open sets $(U_i,\varphi_i:\tilde U_i/\Gamma_i\to U_i)$ where $\tilde U_i$ are open subsets of $\R^n,$ $\Gamma_i$ are finite groups acting effectively on $U_i$ and $\varphi_i$ are homeomorphisms. The orbifold is smooth if  the coordinate changes $\varphi_i\circ\varphi_j^{-1}$ can be lifted to diffeomorphisms $\tilde U_i\to\tilde U_j$. There is a well-defined notion of \emph{local group} for every point $x$, namely the smallest possible group which gives a local chart for $x$, and points with trivial local group are \emph{regular points} of $\mathcal O$. Points with non-trivial local group are \emph{singular points}. The set of regular points of an orbifold is a smooth manifold. The topological space $X$ is called the \emph{underlying topological space} of the orbifold. An orbifold is \emph{orientable} if  there is an orbifold atlas such that all groups $\Gamma$ in the definition act by orientation-preserving diffeomorphisms, and the coordinate changes are lifted to orientation-preserving diffeomorphisms. For details see \cite{boileau-maillot-porti}, \cite{choi} or \cite{ratcliffe}.

A compact orbifold is \emph{spherical} if there is an atlas as above, such that the groups $\Gamma_i$ preserve a Riemannian metric $\tilde g_i$ on $\tilde U_i$ of constant sectional curvature $1$ and the coordinate changes are lifted to isometries $(\tilde U_i,\tilde g_i)\to (\tilde U_j,\tilde g_j)$. An (orientation-preserving) diffeomorphism (resp. isometry) between spherical orbifolds $\mathcal O,\mathcal O'$ is an homeomorphism of the underlying topological spaces which can be locally lifted to an (orientation-preserving) diffeomorphism (resp. isometry) $\tilde U_i'\to\tilde U_j'$. It is known that any compact spherical orbifold  can be seen as a global quotient of $S^n$, i.e. if $\mathcal O$ is a compact spherical orbifold of dimension $n$, then  there exists a finite group of isometries of $S^n$ such that $\mathcal{O}$ is isometric to $S^n/G$ (see \cite[Theorem 13.3.10]{ratcliffe}); if the spherical orbifold is orientable $G$ is a subgroup of $\SO(n+1).$ By a result of de Rham \cite{derham} two diffeomorphic spherical orbifolds are isometric.


Let us now explain the local models of $2$-orbifolds. The underlying topological space of a 2-orbifold is a 2-manifold with boundary. 
If $x$ is a singular point,   a neighborhood of $x$  is modelled by $D^2/\Gamma$ where the local group $\Gamma$ can be a cyclic group of rotations ($x$ is called a cone point), a group of order 2 generated by a reflection ($x$ is a  mirror reflector) or a dihedral group  generated by an index 2 subgroup of rotations and a reflection  (in this case $x$ is called a corner reflector). The local models are presented in Figure~\ref{lm2o}, a cone point or a corner reflector is labelled by its singularity index, i.e. an integer corresponding   to the order of the subgroup of rotations in $\Gamma$. We remark that the  boundary of the underlying topological space consists of mirror reflectors and corner reflectors, and the singular set might contain  in addition some   isolated points corresponding to cone points. 
If $X$ is a 2-manifold  without boundary we denote by $X(n_1,\dots,n_k)$ the 2-orbifold with underlying topological space $X$ and with $k$  cone points  of singularity index  $n_1,\ldots,n_k$. If $X$ is a 2-manifold with  non-empty connected boundary  we denote by $X(n_1,\dots,n_k;m_1,\dots,m_h)$ the  2-orbifold with $k$ cone points of singularity index $n_1,\ldots,n_k$ and with $h$ corner reflectors of singularity index $m_1,\ldots,m_h$. 

\begin{figure}[htbp]
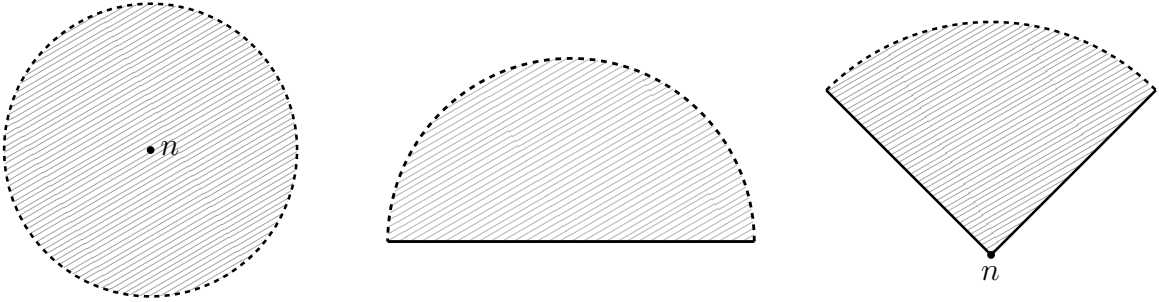

\centering
\begin{minipage}[c]{.3\textwidth}
\centering
\includegraphics[width=.8\textwidth]{cone.eps} 
\end{minipage}%
\hspace{5mm}
\begin{minipage}[c]{.3\textwidth}
\centering
\includegraphics[width=\textwidth]{mirror.eps} 
\end{minipage}%
\hspace{5mm}
\begin{minipage}[c]{.3\textwidth}
\centering
\includegraphics[width=.9\textwidth]{corner.eps} 
\end{minipage}
\caption{Local models of 2-orbifolds. On the left, cone point. In the middle, mirror reflector. On the right, corner point.}\label{lm2o}
\end{figure}

Let us now turn the attention to 3-orbifolds. We will only consider orientable 3-orbifolds. The underlying topological space of an  orientable 3-orbifold is a 3-manifold and the singular set is a trivalent graph. 
The local models are represented in Figure~\ref{lm3o}. Excluding the vertices of the graph, the local group of  a singular point  is cyclic;  an edge of the graph is  labelled by its singularity index, that is the order of the related cyclic local groups.

\begin{figure}[htbp]
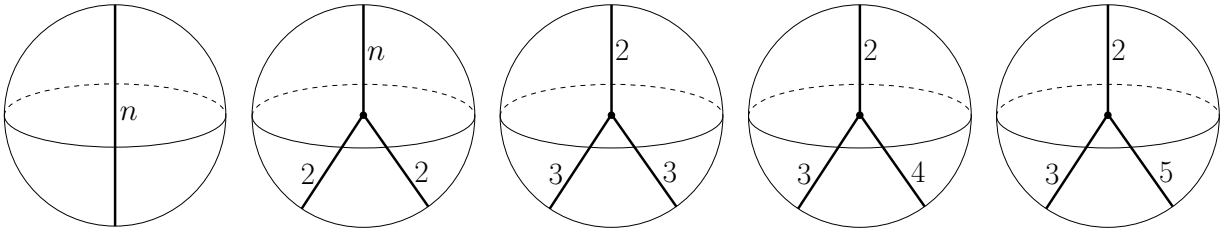

\centering
\begin{minipage}[c]{.2\textwidth}
\centering
\includegraphics[height=3cm]{sphere1.eps} 
\end{minipage}%
\begin{minipage}[c]{.2\textwidth}
\centering
\includegraphics[height=3cm]{sphere2.eps} 
\end{minipage}%
\begin{minipage}[c]{.2\textwidth}
\centering
\includegraphics[height=3cm]{sphere3.eps} 
\end{minipage}%
\begin{minipage}[c]{.2\textwidth}
\centering
\includegraphics[height=3cm]{sphere4.eps} 
\end{minipage}%
\begin{minipage}[c]{.2\textwidth}
\centering
\includegraphics[height=3cm]{sphere5.eps} 
\end{minipage}%
\caption{Local models of 3-orbifolds.}\label{lm3o}
\end{figure}


In this paper we deal with  spherical  2-orbifolds and orientable spherical 3-orbifolds, namely orbifolds $\mathcal O$ which are obtained as the quotient of $S^2$ (resp. $S^3$) by a finite group $G<\O(3)$ (resp. $G<\SO(4)$) of isometries.  An isometry between two spherical 3-orbifolds $\mathcal O=S^3/\Gamma$ and $\mathcal O'=S^3/\Gamma'$ can thus be lifted to an isometry of $S^3$ which conjugates $\Gamma$ to $\Gamma'$. If the isometry between the orbifolds is orientation-preserving, then the lift to $S^3$ is orientation-preserving. For this reason, the classification of spherical orientable 3-orbifolds $S^3/G$ up to orientation-preserving isometries corresponds to the algebraic classification of finite subgroups of $\SO(4)$ up to conjugation in $\SO(4)$.


\section{Isometry groups of spherical three-orbifolds} \label{sec isometry group}

The purpose of this section is to determine the isometry group of the spherical three-orbifolds $S^3/G$, once the finite subgroup $G<\SO(4)$ in the list of Table \ref{subgroup} is given. 

\subsection{Orientation-preserving isometries}\label{o-p i}
We start by determining the index 2 subgroup of orientation-preserving isometries. By the same argument as the last paragraph of Section \ref{sec spherical 3-orbifolds}, the subgroup of orientation-preserving isometries of $S^3/G$, which we denote by $\Isompr(S^3/G)$, is isomorphic to $\Norm_{\SO(4)}(G)/G$. The latter is in turn isomorphic to the quotient $\Norm_{S^3\times S^3}(\tilde G)/\tilde G$.

A special case of Lemma \ref{classificationS3} is the following:


\begin{Lemma}\label{lemma diagramma}
Let $\tilde G=(L,L_K,R,R_K,\phi)$ be a finite subgroup of $S^3\times S^3 $ containing $\mathrm{Ker}(\Phi)$. An element $(g,f)\in S^3\times S^3 $ is contained in the normalizer $N_{S^3\times S^3}(G)$ if and only if the following three conditions are satisfied:
\begin{enumerate}
\item $(g,f)\in  N_{S^3}(L)\times N_{S^3}(R)$;
\item $(g,f)\in  N_{S^3}(L_K)\times N_{S^3}(R_K)$;
\item the following diagram commutes:

\begin{equation} \label{diagramma quadrato}
\begin{gathered}
\xymatrix{
L/L_{K} \ar[d]^-{\alpha}\ar[r]^-{\phi} & R/R_{K} \ar[d]^-{\beta}\\
L/L_{K}   \ar[r]^-{\phi} & R/R_{K} \\
}
\end{gathered}
\end{equation}

where $\alpha(xL_K)=g^{-1}xgL_K$ and   $\beta(yR_K)=f^{-1}yfR_K$. 
\end{enumerate}
   
\end{Lemma}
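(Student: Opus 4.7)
The plan is to obtain this statement as the direct specialization of Lemma~\ref{classificationS3} to the case $\tilde G' = \tilde G$. The observation is that $(g,f) \in N_{S^3 \times S^3}(\tilde G)$ precisely when conjugation by $(g,f)$ maps $\tilde G$ to itself as a subset of $S^3 \times S^3$, which is exactly the hypothesis ``$(g,f)$ conjugates $\tilde G$ to $\tilde G'$'' of Lemma~\ref{classificationS3} with the choice $(L', L'_K, R', R'_K, \phi') = (L, L_K, R, R_K, \phi)$. So the strategy is simply to read off the three conditions of Lemma~\ref{classificationS3} in this particular case and check that they agree, term by term, with those in the statement.

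First, I would rewrite condition (1) of Lemma~\ref{classificationS3}, namely $g^{-1}Lg = L'$ and $f^{-1}Rf = R'$: under the identification $L = L'$ and $R = R'$, this is the assertion $g \in N_{S^3}(L)$ and $f \in N_{S^3}(R)$, which is exactly condition (1) above. The same argument, applied to $L_K = L'_K$ and $R_K = R'_K$, turns condition (2) of Lemma~\ref{classificationS3} into condition (2) here. Finally, for condition (3), the diagram in Lemma~\ref{classificationS3} collapses to the square displayed in \eqref{diagramma quadrato} because both horizontal arrows become $\phi$ and both vertical arrows are the automorphisms of $L/L_K$ and $R/R_K$ induced by conjugation by $g$ and $f$ respectively; these are well-defined automorphisms precisely because conditions (1) and (2) guarantee that $g$ normalizes both $L$ and $L_K$ (and similarly $f$ normalizes both $R$ and $R_K$).

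There is essentially no obstacle here beyond the bookkeeping: all three conditions of Lemma~\ref{classificationS3} are \emph{symmetric} in $\tilde G$ and $\tilde G'$ in the sense that they continue to make sense (and to be equivalent to self-conjugation) when $\tilde G'$ is replaced by $\tilde G$. The only mild point worth emphasizing in the writeup is that the maps $\alpha$ and $\beta$ in the commutative square of the statement are genuine automorphisms of $L/L_K$ and $R/R_K$ as a consequence of conditions (1) and (2), so that their appearance in the commutativity requirement is meaningful. Once this is noted, the equivalence of the three stated conditions with $(g,f) \in N_{S^3 \times S^3}(\tilde G)$ is an immediate consequence of Lemma~\ref{classificationS3}.
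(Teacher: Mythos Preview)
Your proposal is correct and matches the paper's own treatment: the paper states this lemma as ``a special case of Lemma~\ref{classificationS3}'' without further proof, and your argument spells out precisely that specialization $\tilde G' = \tilde G$.
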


First, it is necessary to understand the normalizers of the finite subgroups of $S^3$. These are listed for instance in \cite{mccullough}. We report a list here:

\begin{equation} \label{list normalizers}
\begin{array}{rll} 
\Norm_{S^3}(C_n)=& \O(2)^* & \quad \text{if }n>2 \\
\Norm_{S^3}(C_2)=& S^3 & \quad\text{if }n=2 \\
\Norm_{S^3}(D^*_{4n})=& D^*_{8n} & \quad\text{if }n>2 \\
\Norm_{S^3}(D^*_{8})=& O^* & \quad\text{if }n=2 \\
\Norm_{S^3}(T^*)=& O^* & \\
\Norm_{S^3}(O^*)=& O^* & \\
\Norm_{S^3}(I^*)=& I^* & \\
\end{array}
\end{equation}

We split the computation in several cases, including in each case those groups for which the techniques involved are comparable. Families 1, $1'$, 11, $11'$ are treated in a systematic way in Case \ref{casedifficult}, although for some special values of the indices, they should in principle fall in the categories of some of the previous cases. In the following we denote by $D_n$ the dihedral group of order $n$ and $O$ is the octahedral group (that is isomorphic to $O^*/C_2$, and also to the symmetric group on 4 elements). If $A$ is an abelian group, we denote by $\mathrm{Dih}(A)$ the semidirect product $\mathbb{Z}_2\ltimes A$ where the involution in $\mathbb{Z}_2$ inverts by conjugation each element of the normal subgroup $A.$

\begin{case}
$\tilde G$ is a product, i.e. $L=L_K$ and $R=R_K$.
\end{case}

In this case $L/L_K$ and $R/R_K$ are trivial groups, hence the conditions of Lemma \ref{lemma diagramma} are trivially satisfied. Therefore $\Norm_{S^3\times S^3}(\tilde G)=\Norm_{S^3}(L)\times\Norm_{S^3}(R)$. Hence $\Isom\!^+\!(S^3/G)=(\Norm_{S^3}(L)/L)\times(\Norm_{S^3}(R)/R)$. This is the case of Families 2, 5, 7, 9, 10, 14, 15, 19 and Groups 20, 23, 24, 25, 29, 30 of Table \ref{subgroup}.

\begin{case}
$L/L_K\cong R/R_K\cong\Z_2$ and $L_K,R_K$ are not generalized quaternion groups.
\end{case}

It turns out that, for the pairs $(L,L_K)=(C_{4n},C_{2n}),(D^*_{4n},C_{2n}),(O^*,T^*)$, the normalizer $\Norm_{S^3}(L)$ also normalizes $L_K$ (or analogously for $R$ and $R_K$). In this case, the condition of commutativity of the diagram \eqref{diagramma quadrato} is trivially satisfied, since the identity is the only automorphism of $\Z_2$. This shows that, for Families 3, 8, 16 and Group 28 one has $\Norm_{S^3\times S^3}(\tilde G)=\Norm_{S^3}(L)\times\Norm_{S^3}(R)$.

To understand the isomorphism type of the orientation-preserving isometry group, for instance for the group in Family 3, namely 
$$\tilde G=(C_{4m}/C_{2m},D^*_{4n}/C_{2n})\,,$$
we consider
$$\Isompr(S^3/G)=\Norm_{S^3\times S^3}(\tilde G)/\tilde G=\left((\O(2)^*\times D_{8n}^*)/(C_{2m}\times C_{2n})\right)/(\tilde G/(C_{2m}\times C_{2n}))\,.$$
Therefore we get
$$\Isompr(S^3/G)\cong(\O(2)\times D_4)/\Z_2$$
and, observing that $D_4\cong \Z_2\times\Z_2$,
one concludes that
$$\Isompr(S^3/G)\cong\O(2)\times \Z_2\,.$$

\begin{case}
$L/L_K\cong R/R_K\cong\Z_2$ and $L_K$ or $R_K$ is a generalized quaternion group.
\end{case}

When $(L,L_K)=(D^*_{8n},D^*_{4n})$, $D^*_{4n}$ is not normal in $\Norm_{S^3}(L)=D^*_{16n}$, and the normalizer of $D^*_{4n}$ is $D^*_{8n}$. This is the case of Families 4, 12, 13 and 17. Basically, here one has to consider
$$\Norm_{S^3\times S^3}(\tilde G)=(\Norm_{S^3}(L)\cap\Norm_{S^3}(L_K))\times (\Norm_{S^3}(R)\cap\Norm_{S^3}(R_K))$$
and apply the above strategy to successively compute $\Isom(S^3/G)$. For instance, for Family 13,
$$\tilde G=(D^*_{8m}/D^*_{4m},D^*_{4n}/C_{2n})\,,$$
we have
$$\Norm_{S^3\times S^3}(\tilde G)=D^*_{8m}\times D^*_{8n}\,.$$
Hence one gets
$$\Isompr(S^3/G)\cong(\Z_2\times D_4)/\Z_2\cong\Z_2\times\Z_2\,.$$

\begin{case} \label{caseZ3}
$L/L_K\cong R/R_K\cong\Z_3$.
\end{case}
This case includes Family 6 and Group 22. For Group 22, namely
$$\tilde G=(T^*/D^*_{8},T^*/D^*_{8})\,,$$
we have $\Norm_{S^3}(L)=\Norm_{S^3}(R)=O^*$, and $L_K=R_K=D_8^*$ is normal in $O^*$. The induced action of the elements of $T^*$ on $T^*/D_8^*=\Z_3$ is the identity, hence the normalizer of $\tilde G$ contains $T^*\times T^*$. Moreover, the induced action of elements of $O^*\setminus T^*$ on $\Z_3$ is dihedral (i.e. sends each element of $\Z_3$ to its inverse), hence elements of $O^*\setminus T^*$ on the left side have to be paired to elements of $O^*\setminus T^*$ on the right side to make the diagram \eqref{diagramma quadrato} commutative. Hence
$$\Norm_{S^3\times S^3}(\tilde G)=(O^*/T^*,O^*/T^*)$$
and the isometry group is $D_6$. By a similar argument, one checks that the normalizer of 
$$\tilde G=(C_{6m}/C_{2m},T^*/D^*_{8})$$
is 
$$\Norm_{S^3\times S^3}(\tilde G)=(\O(2)^*/S^1,O^*/T^*)\,.$$
Finally, one can compute
\begin{align*}
\Isompr(S^3/G)&=\left((\O(2)^*/S^1,O^*/T^*)/(C_{2m}\times D_8^*)\right)/((C_{6m}/C_{2m},T^*/D^*_{8})/(C_{2m}\times D_8^*))\\
&\cong\mathrm{Dih}(S^1\times \Z_3)/\Z_3\cong \O(2)\,.
\end{align*}

\begin{case}
$L/L_K\cong R/R_K\cong D_6$.
\end{case}
For Group 27, in a very similar fashion as Group 22, the normalizer in $S^3\times S^3$ is $(O^*/T^*,O^*/T^*)$ and thus the orientation-preserving isometry group is isomorphic to $\Z_3$. The other groups to be considered here are those in Family 18, namely
$$\tilde G=(D^*_{12m}/C_{2m},O^*/D^*_{8})\,.$$
Again, arguing similarly to Case \ref{caseZ3}, one sees that
$$\Norm_{S^3\times S^3}(\tilde G)=(D_{24m}^*/C_{4m},O^*/D_8^*)\,.$$
Therefore the orientation-preserving  isometry group is $\Z_2$. 

\begin{table}
\begin{tabular}{|l|c|c|c|c|}
\hline
 &  $\tilde G$ & $\Isompr(S^3/G)$ & $\Isom_0(S^3/G)$ & $\pi_0 \Isompr(S^3/G)$ \\
\hline
 1. & $(C_{2mr}/C_{2m},C_{2nr}/C_{2n})_s$ & $\mathrm{Dih}(S^1\times S^1)$  & $S^1\times S^1$ & $\Z_ 2$   \\  
 $1^{\prime}$. & $(C_{mr}/C_{m},C_{nr}/C_{n})_s$ & $\mathrm{Dih}(S^1\times S^1)$ & $S^1\times S^1$ & $\Z_ 2$  \\
 2. & $(C_{2m}/C_{2m},D^*_{4n}/D^*_{4n})$ & $\O(2)\times \Z_2$ & $S^1$ & $\Z_2\times\Z_ 2$  \\ 
 3. & $(C_{4m}/C_{2m},D^*_{4n}/C_{2n})$ & $\O(2)\times \Z_2$ & $S^1$ & $\Z_2\times\Z_ 2$    \\ 
 4. & $(C_{4m}/C_{2m},D^*_{8n}/D^*_{4n})$ & $\O(2)$ & $S^1$ & $\Z_ 2$   \\ 
 5. & $(C_{2m}/C_{2m},T^*/T^*)$ & $\O(2)\times\Z_2$ & $S^1$ & $\Z_2\times\Z_ 2$   \\
 6. & $(C_{6m}/C_{2m},T^*/D^*_{8})$ & $\O(2)$ & $S^1$ & $\Z_ 2$   \\ 
 7. & $(C_{2m}/C_{2m},O^*/O^*)$ & $\O(2)$ & $S^1$ & $\Z_ 2$   \\
 8. & $(C_{4m}/C_{2m},O^*/T^*)$ & $\O(2)$ & $S^1$ & $\Z_ 2$   \\ 
 9. & $(C_{2m}/C_{2m},I^*/I^*)$ & $\O(2)$ & $S^1$ & $\Z_ 2$    \\ 
 10. & $(D^*_{4m}/D^*_{4m},D^*_{4n}/D^*_{4n})$ & $\Z_2\times\Z_2$ & $\{1\}$ & $\Z_2\times\Z_2$  \\
 11. & $(D^*_{4mr}/C_{2m},D^*_{4nr}/C_{2n})_s$ & $\Z_2$ & $\{1\}$ & $\Z_2$  \\ 
 $11^{\prime}$. & $(D^*_{2mr}/C_{m},D^*_{2nr}/C_{n})_s$ & $\Z_2\times\Z_2$ & $\{1\}$ & $\Z_2\times\Z_2$    \\
 12. &  $(D^*_{8m}/D^*_{4m},D^*_{8n}/D^*_{4n})$ & $\Z_2$ & $\{1\}$ & $\Z_2$ \\
 13. &  $(D^*_{8m}/D^*_{4m},D^*_{4n}/C_{2n})$ & $\Z_2\times\Z_2$ & $\{1\}$ & $\Z_2\times\Z_2$ \\
 14. &  $(D^*_{4m}/D^*_{4m},T^*/T^*)$ & $\Z_2\times\Z_2$ & $\{1\}$ & $\Z_2\times\Z_2$ \\
 15. &  $(D^*_{4m}/D^*_{4m},O^*/O^*)$ & $\Z_2$ & $\{1\}$ & $\Z_2$ \\
16. &  $(D^*_{4m}/C_{2m},O^*/T^*)$ & $\Z_2\times\Z_2$ & $\{1\}$ & $\Z_2\times\Z_2$ \\
17. &  $(D^*_{8m}/D^*_{4m},O^*/T^*)$ & $\Z_2$ & $\{1\}$ & $\Z_2$ \\
18. & $(D^*_{12m}/C_{2m},O^*/D^*_{8})$ & $\Z_2$ & $\{1\}$ & $\Z_2$ \\
19. & $(D^*_{4m}/D^*_{4m},I^*/I^*)$ & $\Z_2$ & $\{1\}$ & $\Z_2$ \\
20. & $(T^*/T^*,T^*/T^*)$ & $\Z_2\times\Z_2$ & $\{1\}$ & $\Z_2\times\Z_2$ \\
21. & $(T^*/C_2,T^*/C_2)$ & $\Z_2$ & $\{1\}$ & $\Z_2$ \\
$21^{\prime}.$ & $(T^*/C_1,T^*/C_1)$ & $\Z_2\times\Z_2$ & $\{1\}$ & $\Z_2\times\Z_2$ \\
22. & $(T^*/D^*_{8},T^*/D^*_{8})$ & $D_6$ & $\{1\}$ & $D_6$ \\
23. & $(T^*/T^*,O^*/O^*)$ & $\Z_2$ & $\{1\}$ & $\Z_2$ \\
24. & $(T^*/T^*,I^*/I^*)$ & $\Z_2$ & $\{1\}$ & $\Z_2$  \\
25. & $(O^*/O^*,O^*/O^*)$ & $\{1\}$ & $\{1\}$ & $\{1\}$ \\
26. & $(O^*/C_2,O^*/C_2)$ & $\{1\}$ & $\{1\}$ & $\{1\}$ \\
$26^{\prime}.$ & $(O^*/C_1,O^*/C_1)_{Id}$ & $\Z_2$ & $\{1\}$ & $\Z_2$ \\
$26^{\prime\prime}.$ & $(O^*/C_1,O^*/C_1)_f$ & $\Z_2$ & $\{1\}$ & $\Z_2$ \\
27. & $(O^*/D^*_{8},O^*/D^*_{8})$ & $\Z_3$ & $\{1\}$ & $\Z_3$ \\
28. & $(O^*/T^*,O^*/T^*)$ & $\Z_2$ & $\{1\}$ & $\Z_2$ \\
29. & $(O^*/O^*,I^*/I^*)$ & $\{1\}$ & $\{1\}$ & $\{1\}$ \\
30. &   $(I^*/I^*,I^*/I^*)$ & $\{1\}$ & $\{1\}$ & $\{1\}$ \\
31. &   $(I^*/C_2,I^*/C_2)_{Id}$ & $\{1\}$ & $\{1\}$ & $\{1\}$ \\
$31^{\prime}.$ & $(I^*/C_1,I^*/C_1)_{Id}$ & $\Z_2$ & $\{1\}$ & $\Z_2$ \\
32. &   $(I^*/C_2,I^*/C_2)_{f}$ & $\{1\}$ & $\{1\}$ & $\{1\}$ \\
$32^{\prime}.$ & $(I^*/C_1,I^*/C_1)_f$ & $\Z_2$ & $\{1\}$ & $\Z_2$ \\
33. & $(D^*_{8m}/C_{2m},D^*_{8n}/C_{2n})_f$ &$\{1\}$ & $\{1\}$ & $\{1\}$    \\ 
 $33^{\prime}$. & $(D^*_{8m}/C_{m},D^*_{8n}/C_{n})_f$ & $\Z_2$ & $\{1\}$ & $\Z_2$   \\
34. &  $(C_{4m}/C_{m},D^*_{4n}/C_{n})$ & $\O(2)$ & $S^1$ & $\Z_ 2$  \\
\hline
\end{tabular}
\bigskip
\caption{Table of orientation-preserving isometry groups for $C_{2m}\neq C_2$, $D_{4m}^*\neq D_4^*,D_8^*$, $r >2$}
\label{tableisometry}
\end{table}

\begin{case}
$L/L_K\cong R/R_K\cong T,T^*,O,O^*,I$ or $I^*$.
\end{case}
We start by considering Group 21, i.e. $\tilde G=(T^*/C_2,T^*/C_2)$. We have $\Norm_{S^3}(L)=\Norm_{S^3}(R)=O^*$. On the other hand, 
observing that the centre of $O^*$ is precisely $C_2$, the normalizer of $\tilde G$ turns out to be $(O^*/C_2,O^*/C_2)$. In fact, in order to satisfy Lemma \ref{lemma diagramma}, any element of the form $(g,1)$ which normalizes $\tilde G$ must have $g\in C_2$. Hence the orientation-preserving  isometry group is $\Z_2$. 

By analogous considerations, since $\Norm_{S^3} (O^*)=O^*$ and $\Norm_{S^3} (I^*)=I^*$, the orientation-preserving  isometry groups for Groups 26 and 31 are trivial. Group 32, namely $\tilde G=(I^*/C_2,I^*/C_2)_{f}$, is defined  by means of a non-inner automorphism of $I^*$. However, since any  automorphism preserves the centre and $O^*/C_2$  has trivial centre, the same argument applies and the orientation-preserving  isometry group is trivial.

For Group $21'$, namely $\tilde G=(T^*/C_1,T^*/C_1)$, the normalizer is again $(O^*/C_2,O^*/C_2)$, since the centre of $O^*$ is $C_2$. The orientation-preserving isometry group is $\Z_2\times\Z_2$. Finally, the normalizer of Groups $26'$ and $26''$ is Group 26, the normalizer of Group $31'$ is Group 31, and the normalizer of Group $32'$ is Group 32. Therefore Groups $26'$, $26''$, $31'$ and $32'$ have orientation-preserving  isometry groups isomorphic to $\Z_2$.

\begin{case}
$L/L_K\cong R/R_K\cong \Z_4,\Z_2\times\Z_2$ or $D_8^*$.
\end{case}
This is the case of Families 33, $33'$ and 34. 
The definition of Family 33, namely
$$\tilde G=(D^*_{8m}/C_{2m},D^*_{8n}/C_{2n})_f\,,$$
makes use of a non-trivial automorphism of $\Z_2\times\Z_2$. One can easily check that in $\Norm_{S^3}(L)\times \Norm_{S^3}(R)=D^*_{16m}\times D^*_{16m}$, it is not possible to obtain a pair $(g,f)$ which makes the diagram \eqref{diagramma quadrato} commute, unless $(g,f)$ is already in $\tilde G$. This is essentially due to the fact that the non-trivial automorphism of $D_4\cong \Z_2\times\Z_2$ cannot be extended to an automorphism of $D_8$. Hence the orientation-preserving isometry group is trivial. Clearly any group in Family 33' is normalized by the corresponding group in Family 33, and thus the group of orientation-preserving isometries is $\Z_2$.

Family 34, namely $$\tilde G=(C_{4m}/C_{m},D^*_{4n}/C_{n})\,,$$
is defined by means of the fact that the quotients $L/L_K$ and $R/R_K$ are both isomorphic to $\Z_4$. The normalizer here is $(\O(2)^*/S^1,D_{8n}^*/D_{4n}^*)$. The orientation-preserving isometry group is a dihedral extension of 
$(S^1\times D_{4n}^*)/\tilde G$. The latter is a quotient of $S^1\times D_4^*\cong S^1\times \Z_4$ by a diagonal action of $\Z_4$, and therefore still isomorphic to $S^1$. In conclusion, the group of orientation-preserving isometries is isomorphic to $\O(2)$.

\begin{case}
Exceptional cases for small $m$ or $n$.
\end{case}
It is necessary to distinguish from the results obtained above some special cases for small values of the indices $m$ and $n$. Indeed, as in the list \eqref{list normalizers}, when $n=2$ the subgroup $C_2$ of $S^3$ is normalized by the whole $S^3$, whereas the normalizer of $D_8^*$ is $O^*$. Therefore one obtains different isometry groups when $L,R=C_2$ or $L,R=D_8^*$. The results are collected in Table \ref{tableisometryexceptions}.

Excluding Families 1,$1'$,11, $11'$ which are discussed below, to compute the isometry group in these special cases one uses the same approach as above. For Families 2, 5, 7, 9, 10, 14, 15, 19, which are products, one obtains straightforwardly the result. 

For the other groups in which $D^*_8$ may appear, one checks that there are no difference with the case $m>1$ since $O^*$, which is the normalizer of $D^*_8$, does not normalize any subgroup of order 4 of $D^*_8$.




\begin{table}\label{tabella isometrie small indices}
\begin{tabular}{|l|c|c|c|c|} 
\hline
 & $\tilde G$ & $\Isompr(S^3/G)$ & $\Isom_0(S^3/G)$ & $\pi_0 \Isompr(S^3/G)$ \\
\hline
 1. & $(C_{2m}/C_{2m},C_{2n}/C_{2n})$ & $\O(2)\times \O(2)$  & $S^1\times S^1$ & $\Z_2\times \Z_2$  \\
  &  $(C_{4m}/C_{2m},C_{4n}/C_{2n})$ & $\O(2)\widetilde{\times}\O(2)$  & $S^1\times S^1$ & $\Z_2\times \Z_2$ \\
  & $(C_{2}/C_{2},C_{2n}/C_{2n})$ & $\SO(3)\times \O(2)$  & $ \SO(3)\times S^1$ & $\Z_2$  \\
  & $(C_2/C_2,C_2/C_2)$ & $\mathrm{P}\SO(4)$ & $\mathrm{P}\SO(4)$ & $\{1\}$ \\
 $1^{\prime}$. & $(C_{2m}/C_{m},C_{2n}/C_{n})$ & $\O(2)^*\widetilde{\times} \O(2)^*$ & $S^1\times S^1$ & $\Z_2\times \Z_2$ \\
 & $(C_2/C_1,C_{2n}/C_n)$ & $S^3\widetilde{\times}\O(2)^*$ & $S^3\widetilde{\times}S^1$ & $\Z_2$ \\
 & $(C_2/C_1,C_2/C_1)$ & $\SO(4)$ & $\SO(4)$ & $\{1\}$ \\
 2. & $(C_{2}/C_{2},D^*_{4n}/D^*_{4n})$ & $\SO(3)\times \Z_2$ & $\SO(3)$ & $\Z_2$  \\ 
 & $(C_{2m}/C_{2m},D^*_{8}/D^*_{8})$ & $\O(2)\times D_6$ & $S^1$ & $\Z_2\times D_6$  \\ 
 & $(C_{2}/C_{2},D^*_{8}/D^*_{8})$ & $\SO(3)\times D_6$ & $\SO(3)$ & $D_6$  \\ 
 5. & $(C_{2}/C_{2},T^*/T^*)$ & $\SO(3)\times\Z_2$ & $\SO(3)$ & $\Z_2$   \\
 7. & $(C_{2}/C_{2},O^*/O^*)$ & $\SO(3)$ & $\SO(3)$ & $\{1\}$   \\
 9. & $(C_{2}/C_{2},I^*/I^*)$ & $\SO(3)$ & $\SO(3)$ & $\{1\}$    \\ 
 10. & $(D^*_{8}/D^*_{8},D^*_{4n}/D^*_{4n})$ & $D_6\times\Z_2$ & $\{1\}$ & $D_6\times\Z_2$  \\
& $(D^*_{8}/D^*_{8},D^*_{8}/D^*_{8})$ & $D_6\times D_6$ & $\{1\}$ & $D_6\times D_6$  \\

 11. & $(D^*_{4m}/C_{2m},D^*_{4n}/C_{2n})$ & $\Z_2\times\Z_2\times\Z_2$ & $\{1\}$ & $\Z_2\times\Z_2\times\Z_2$   \\ 
 & $(D^*_{8m}/C_{2m},D^*_{8n}/C_{2n})$ & $\Z_2\times\Z_2\times\Z_2$ & $\{1\}$ & $\Z_2\times\Z_2\times\Z_2$   \\ 
  & $(D^*_{8}/C_{2},D^*_{8}/C_{2})$ & $O$ & $\{1\}$ & $O$   \\ 
 $11^{\prime}$. & $(D^*_{8}/C_{1},D^*_{8}/C_{1})$ & $D_6\times\Z_2$ & $\{1\}$ & $D_6\times\Z_2$    \\
 14. &  $(D^*_{8}/D^*_{8},T^*/T^*)$ & $D_6\times\Z_2$ & $\{1\}$ & $D_6\times\Z_2$ \\
 15. &  $(D^*_{8}/D^*_{8},O^*/O^*)$ & $D_6$ & $\{1\}$ & $D_6$ \\
19. & $(D^*_{8}/D^*_{8},I^*/I^*)$ & $D_6$ & $\{1\}$ & $D_6$ \\
\hline
\end{tabular}
\bigskip
\caption{Table of orientation-preserving isometry groups for small indices}
\label{tableisometryexceptions}
\end{table}

\begin{case} \label{casedifficult}
Families 1, 1', 11, 11'.
\end{case}
We are left with the treatment of the groups for which $L_K$ and $R_K$ are both cyclic groups, while $L$ and $R$ are both of the same type, either cyclic of generalized dihedral. For Family 1,
$$\tilde G=(C_{2mr}/C_{2m},C_{2nr}/C_{2n})_s\,,$$
where we recall that the index $s$ denotes the isomorphism $\phi:\Z_r\to\Z_r$ given by $1\mapsto s$,
 we have 
 $$\Norm_{S^3}(L)=\Norm_{S^3}(R)=\O(2)^*\,.$$
Moreover $\O(2)^*$ preserves the subgroups $L_K$ and $R_K$. We need to check the commutativity of diagram \eqref{diagramma quadrato}. For this purpose, observe that the diagram commutes trivially when we choose elements of the form $(g,1)$ or $(1,g)$, for $g\in S^1$. On the other hand, the induced action of elements of $\O(2)^*\setminus S^1$ on $\Z_r$ is dihedral, hence the normalizer is 
$$\Norm_{S^3\times S^3}(\tilde G)=(\O(2)^*/S^1,\O(2)^*/S^1)$$
unless $r=2$ (or $r=1$). The isometry group $\Norm_{S^3\times S^3}(\tilde G)/\tilde G$ is a dihedral extension of $(S^1\times S^1)/\tilde G$, the latter being again isomorphic to $S^1\times S^1$ (for an explicit isomorphism, see Equation \eqref{isomorfismo gamma} below). Hence we get
$\Isompr(S^3/\tilde G)\cong \mathrm{Dih}(S^1\times S^1)$. The same result is recovered analogously for Family $1'$, unless $r=2$.

For some special cases of Family 1, when $r=2$ or $r=1$, $\Norm_{S^3\times S^3}(\tilde G)=\O(2)^*\times \O(2)^*$. For $r=1$, one has $\Isompr(S^3/\tilde G)\cong \O(2)\times \O(2)$. For $r=2$, the orientation-preserving isometry group is $\O(2)\times\O(2)/(-1,-1)=\O(2)\widetilde{\times}\O(2)$. When $r=2$ the group of orientation-preserving isometries of Family $1'$ turns out to be instead $\O(2)^*\times\O(2)^*/(-1,-1)=\O(2)^*\widetilde{\times}\O(2)^*$. The latter cases are collected in Table \ref{tableisometryexceptions}. By means of this analysis, we also recover the trivial cases of $S^3$ itself and of projective space $S^3/\{\pm 1\}$, which correspond to the groups $(C_2/C_1,C_2/C_1)$ and $(C_2/C_2,C_2/C_2)$, having orientation-preserving isometry group $\SO(4)$ and $\mathrm{P}\SO(4)$ respectively.

For Family 11, namely
$$\tilde G=(D^*_{4mr}/C_{2m},D^*_{4nr}/C_{2n})_s\,,$$
it is not difficult to see that the normalizer is 
$$(D^*_{8mr}/C_{2m},D^*_{8nr}/C_{2n})_s$$
unless $r=1$ or $r=2$.
Hence $\Isompr(S^3/G)$ is isomorphic to $\Z_2$. 
When $r=1$ the normalizer of $(D_{4m}^*/C_{2m},D_{4n}^*/C_{2n})$ is $D_{8m}^*\times D_{8n}^*$ (also if $m$ of $n$ are equal to $2$), and the computation of the orientation-preserving isometry group follows. When $r=2$, the normalizer of  $(D_{8m}^*/C_{2m},D_{8n}^*/C_{2n})$ turns out to be $(D_{16m}^*/D_{8m}^*,D_{16n}^*/D_{8n}^*)$, also if one between $m$ or $n$ is equal to one. However, when $m=n=1$, the normalizer is $(O^*/D^*_8,O^*/D^*_8)$ and the orientation-preserving isometry group is isomorphic to $O=O^*/C_2$.

A similar argument shows that for Family $11'$, $\Isompr(S^3/G)$ is isomorphic to $\Z_2\times\Z_2$, also if $r=4$ and one among $m$ and $n$ equals $1$. Thus the only exception is for the group $(D_{8}^*/C_{1},D_{8}^*/C_{1})$.

\subsection{Orientation-reversing isometries}\label{o.r. isometries}
We now compute the full isometry groups of spherical orbifolds $S^3/G$. We collect the results of this section in Table \ref{tableisometryreversing} where we list the groups such that $\Isom(S^3/G)\neq\Isompr(S^3/G)$ and the full isometry groups of the corresponding orbifolds are described.  If the spherical orbifold does not admit any orientation-reversing isometry then $\Isom(S^3/G)$ can be deduced from Tables  \ref{tableisometry} and \ref{tableisometryexceptions}.

An orientation-reversing element of $\O(4)$ is of the form $h\rightarrow p\bar{h}q^{-1}$ where $p$ and $q$ are elements of $S^3$ and $\bar{h}$ is the conjugate element of $h$ (see \cite[p.58]{duval}).  We will denote this isometry by $\overline{\Phi}_{p,q}$. 
Let us remark that 
\begin{equation} \label{or rev conjugation rule}
\overline{\Phi}_{p,q}\overline{\Phi}_{l,r}\overline{\Phi}_{p,q}^{-1}=\overline{\Phi}_{prp^{-1},qlq^{-1}}\,.
\end{equation} 
We state two lemmata whose proofs are straightforward. 

\begin{Lemma}\label{lemma orientation-reversing primo}
Let $\tilde{G}=(L,L_K,R,R_K,\phi)$ be a finite subgroup of $S^3\times S^3 $ containing $\mathrm{Ker}(\Phi)$. If  $\overline{\Phi}_{p,q}$ normalizes $\Phi(\tilde{G})$ then the following conditions are satisfied:
\begin{enumerate}
\item $p^{-1}Lp=R$ and $q^{-1}Rq=L$;
\item  $pL_Kp^{-1}=R_K$ and $qR_Kq^{-1}=L_K.$
\end{enumerate}
   
\end{Lemma}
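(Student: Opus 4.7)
The plan is to translate the hypothesis that $\overline{\Phi}_{p,q}$ normalizes $\Phi(\tilde G)$ into explicit algebraic conditions on $(p,q)$ by computing directly the conjugate $\overline{\Phi}_{p,q}\circ\Phi_{a,b}\circ\overline{\Phi}_{p,q}^{-1}$ for a generic $(a,b)\in\tilde G$. First I would invert $\overline{\Phi}_{p,q}$: using $q^{-1}=\bar q$ for $q\in S^3$ together with $\overline{xy}=\bar y\bar x$, a short check gives
\[
\overline{\Phi}_{p,q}^{-1}(k)=q^{-1}\bar{k}\,p\,.
\]
Substituting and carefully simplifying the two nested quaternionic conjugations (the only place where one has to be careful), the triple composition reduces to an \emph{orientation-preserving} isometry whose pair of parameters is obtained from $(a,b)$ by swapping the two factors and conjugating each by $p$ or $q$. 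In particular it lies in $\Phi(S^3\times S^3)$ and descends, modulo $\mathrm{Ker}(\Phi)$, to an explicit twist map $\sigma_{p,q}\colon S^3\times S^3\to S^3\times S^3$ that interchanges the two $S^3$-factors.

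Since $\tilde G\supset\mathrm{Ker}(\Phi)$ and $\Phi$ is a bijection on subgroups containing the kernel, the normalization hypothesis is equivalent to the single condition $\sigma_{p,q}(\tilde G)=\tilde G$. Condition~(1) then follows by projecting this equality onto each of the two $S^3$-factors: for every $b\in R$ pick a lift $(a,b)\in\tilde G$, apply $\sigma_{p,q}$, and project; together with the symmetric argument for $a\in L$, this produces the inclusions $pRp^{-1}\subseteq L$ and $qLq^{-1}\subseteq R$, and a cardinality comparison upgrades them to the equalities $p^{-1}Lp=R$ and $q^{-1}Rq=L$.

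Condition~(2) is obtained by the same argument restricted to the subsets $\tilde G\cap(S^3\times\{1\})$ and $\tilde G\cap(\{1\}\times S^3)$, which by definition project onto $L_K$ and $R_K$. Because the twist $\sigma_{p,q}$ exchanges the two factors, it carries these two subsets into each other, and the equality $\sigma_{p,q}(\tilde G)=\tilde G$ restricted to them, together with the usual order argument, yields the two claimed conjugation relations between $L_K$ and $R_K$. The only real obstacle in this plan is the quaternionic calculation of the conjugate in the very first step, where one has to keep track of the order reversal under $\overline{\,\cdot\,}$; once the formula for $\sigma_{p,q}$ is established, everything else is a straightforward diagram chase through the definitions of $L$, $R$, $L_K$ and $R_K$.
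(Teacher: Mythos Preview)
Your plan is correct and is exactly the direct computation the paper has in mind when it declares the proof ``straightforward'' and omits it: one finds $\overline{\Phi}_{p,q}\,\Phi_{a,b}\,\overline{\Phi}_{p,q}^{-1}=\Phi_{pbp^{-1},\,qaq^{-1}}$, so the normalization hypothesis is equivalent to $\sigma_{p,q}(\tilde G)=\tilde G$ with $\sigma_{p,q}(a,b)=(pbp^{-1},qaq^{-1})$, and projecting onto the two factors (respectively, restricting to the slices $S^3\times\{1\}$ and $\{1\}\times S^3$) gives~(1) (respectively,~(2)).

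One small remark for when you write out the details: the computation for~(2) actually produces $p^{-1}L_Kp=R_K$ and $q^{-1}R_Kq=L_K$, i.e.\ the same direction of conjugation as in condition~(1); the printed form of~(2) seems to carry a sign typo (compare with~(1), and with the paper's own formula~\eqref{or rev conjugation rule}), so do not be surprised when your calculation does not literally reproduce it.
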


So if an orientation reversing isometry of $S^3$ normalizes $G$, then  we can suppose  up to conjugacy that  $L=R$ and $L_K=R_K.$

\begin{Lemma}\label{lemma orientation-reversing secondo}

Let $\tilde{G}=(R,R_K,R,R_K,\phi)$ be a finite subgroup of $S^3\times S^3 $ containing $\mathrm{Ker}(\Phi)$. The isometry $\overline{\Phi}_{p,q}$ normalizes $\Phi(\tilde{G})$  if and only if the following two conditions are satisfied:

\begin{enumerate}
\item $p,q \in  N_{S^3}(R)$;
\item $p,q \in  N_{S^3}(R_K)$;
\item  the following diagram commutes:
\begin{equation} \label{diagramma quadrato reversing}
\begin{gathered}
\xymatrix{
R/R_{K} \ar[d]^-{\beta}\ar[r]^-{\phi} & R/R_{K} \ar[d]^-{\alpha}\\
R/R_{K}   \ar[r]^-{\phi^{-1}} & R/R_{K} \\
}
\end{gathered}
\end{equation}
where $\alpha(xR_K)=p^{-1}xpR_K$ and   $\beta(xR_K)=q^{-1}xqR_K$.
\end{enumerate}
   
\end{Lemma}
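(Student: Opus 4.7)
The plan is to reduce the statement to a direct algebraic computation of conjugation inside $\O(4)$, and then to unpack the resulting constraint using the $5$-tuple description of $\tilde G$. Working from the definitions $\Phi_{a,b}(h)=ahb^{-1}$ and $\overline\Phi_{p,q}(h)=p\bar h q^{-1}$, and using the standard quaternion identities $\overline{xyz}=\bar z\bar y\bar x$ and $\bar u=u^{-1}$ for $u\in S^3$, a short calculation (analogous to the one producing \eqref{or rev conjugation rule}) yields
\[
\overline\Phi_{p,q}\,\Phi_{a,b}\,\overline\Phi_{p,q}^{-1}\;=\;\Phi_{pbp^{-1},\,qaq^{-1}}.
\]
Since $\mathrm{Ker}(\Phi)\subseteq\tilde G$, the isometry $\overline\Phi_{p,q}$ normalizes $\Phi(\tilde G)$ if and only if, for every $(a,b)\in\tilde G$, the conjugated pair $(pbp^{-1},qaq^{-1})$ lies again in $\tilde G$. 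Everything after this point is the translation of that single membership condition into the three conditions (1)--(3).

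For necessity of (1) I will use that the projections $\pi_1,\pi_2$ surject $\tilde G$ onto $R$, so requiring $pbp^{-1}\in R$ for every $b$ appearing as a second coordinate forces $p\in N_{S^3}(R)$, and symmetrically $q\in N_{S^3}(R)$. For (2) I will use that $R_K\times R_K\subseteq\tilde G$: applying the conjugation to $(a,1)$ with $a\in R_K$ produces $(1,qaq^{-1})\in\tilde G$, whose triviality via $\pi_1$ forces triviality via $\pi_2$, hence $qaq^{-1}\in R_K$; the analogous argument applied to $(1,b)$ with $b\in R_K$ gives $p\in N_{S^3}(R_K)$.

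Assuming (1) and (2), the maps $\alpha$ and $\beta$ are well-defined automorphisms of $R/R_K$, and the remaining requirement reads, for each $(a,b)\in\tilde G$ (equivalently each $aR_K\in R/R_K$ with $bR_K=\phi(aR_K)$), $\phi(pbp^{-1}R_K)=qaq^{-1}R_K$. Since $pbp^{-1}R_K=\alpha^{-1}(bR_K)=\alpha^{-1}(\phi(aR_K))$ and $qaq^{-1}R_K=\beta^{-1}(aR_K)$, this collapses to a single identity between automorphisms of $R/R_K$ involving $\phi,\alpha,\beta$, equivalent to the commutativity of diagram~\eqref{diagramma quadrato reversing}. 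Running this chain of equivalences backwards shows that (1)--(3) together imply $(pbp^{-1},qaq^{-1})\in\tilde G$ for every $(a,b)\in\tilde G$, and hence that $\overline\Phi_{p,q}$ normalizes $\Phi(\tilde G)$, completing the proof.

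The whole argument is essentially bookkeeping once the conjugation formula is in hand; the only genuinely delicate step is the third one. The two subtleties to keep track of are that conjugation from the outside by $p$ (respectively $q$) induces the inverse $\alpha^{-1}$ (respectively $\beta^{-1}$) on cosets of $R_K$ with the convention chosen for $\alpha,\beta$, and that the swap between first and second coordinates in $(pbp^{-1},qaq^{-1})$ -- a direct consequence of the orientation-reversing part of $\overline\Phi_{p,q}$ -- is precisely what produces $\phi^{-1}$, rather than $\phi$, in the bottom row of the diagram.
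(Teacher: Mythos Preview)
Your argument is correct and is precisely the direct verification the paper has in mind: the authors state only that the lemma is ``straightforward'' and give no proof, so there is nothing further to compare. Your key conjugation identity $\overline\Phi_{p,q}\,\Phi_{a,b}\,\overline\Phi_{p,q}^{-1}=\Phi_{pbp^{-1},qaq^{-1}}$ is right (and is in fact what the paper's formula~\eqref{or rev conjugation rule} is meant to express), and your extraction of conditions (1)--(3) from the single membership constraint $(pbp^{-1},qaq^{-1})\in\tilde G$ is clean.

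One small remark on the bookkeeping in step (3): the identity you actually obtain is $\phi\alpha^{-1}\phi=\beta^{-1}$, i.e.\ $\alpha=\phi\beta\phi$. If you read the diagram~\eqref{diagramma quadrato reversing} literally as printed (with $\beta$ on the left and $\alpha$ on the right), its commutativity says $\beta=\phi\alpha\phi$ instead. These two relations differ by swapping $\alpha$ and $\beta$; comparing with the analogous diagram in Lemma~\ref{classificationS3} shows that the positions of $\alpha$ and $\beta$ in~\eqref{diagramma quadrato reversing} are interchanged by a typo, and your derived relation is the one consistent with that earlier lemma and with all the applications the paper makes of this result. So your computation is the correct one; just be aware that you are matching the intended diagram rather than the one literally typeset.
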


Now we want to analyze which groups in  Table~\ref{subgroup} admit an orientation-reversing isometry in their normalizer. The condition given in Lemma~\ref{lemma orientation-reversing primo} excludes all the groups in the families 2, 3, 4, 5, 6, 7, 8, 9, 14, 15, 16, 17, 18, 19 ,23, 24, 29, 34 and  the groups with $n \neq m$ in the families 1, $1^{\prime}$,10, $11$, $11^{\prime}$, 12, 33, $33^{\prime}$.
For  the remaining groups, by Lemma~\ref{lemma orientation-reversing secondo} we obtain that  if $\phi=Id$ then $\overline{\Phi}_{1,1}$ (i.e. the isometry given by the conjugation in $\mathbb{H}$) normalizes  the group and the quotient orbifold admits an orientation-reversing isometry. In these cases the normalizer of $G$ in $\O(4)$  is generated by the normalizer of $G$ in $\SO(4)$ and $\overline{\Phi}_{1,1}$. The element $\overline{\Phi}_{1,1}$ has order two and $\overline{\Phi}_{1,1}\overline{\Phi}_{l,r}\overline{\Phi}_{1,1}^{-1}=\overline{\Phi}_{r,l}$, hence the full isometry group of $S^3/G$ can be easily computed. 

The behavior of families $26^{\prime\prime}$, 32, $32^{\prime}$    33, $33^{\prime}$ is similar, as $\phi^2=Id$ and hence  $\overline{\Phi}_{1,1}$ normalizes again the group. 

The situation for the four remaining families of groups 1, $1^{\prime}$, $11$, $11^{\prime}$ is more complicated. We have already remarked that, if an orientation-reversing element is in the normalizer, then $n=m$. However, in these cases  this necessary condition is not sufficient. 

We explain in detail the situation for Family 1 with $r\geq 2$. In the other remaining cases  the full isometry group can be computed in a very similar way.

\textbf{Family 1.} If $\overline{\Phi}_{p,q}$ normalizes  $G=\Phi((C_{2mr}/C_{2m},C_{2mr}/C_{2m})_s)$, then both $p$ and $q$ normalize $C_{2mr}$. The action by conjugation of $p$ and $q$ on $C_{2mr}$ is either trivial or dihedral. When $p$ and $q$ act in the same way (both trivially or both dihedrally), by Lemma~\ref{lemma orientation-reversing secondo} we obtain that $\phi^2=Id$ and $s^2\equiv_{r} 1$ (i.e. $s^2$ is congruent to 1 $\mod\, r$). In this case $\overline{\Phi}_{1,1}$ normalizes  the group. If $p$ and $q$ act differently, then $\phi^2$ sends an element of $C_{2mr}/C_{2m}$ to its inverse and $s^2\equiv_{r} -1$ (i.e. $s^2$ is congruent to $-1$ $\mod\, r$).  In this case $\overline{\Phi}_{j,1}$ normalizes the group; we remark that  $\overline{\Phi}_{j,1}$ has order 4.

We have $\Isompr(S^3/G)\cong \mathrm{Dih}(S^1\times S^1)$. Since $\Isompr(S^3/G)\cong \Norm_{S^3\times S^3}(\tilde G)/\tilde G$, we represent each isometry as the corresponding coset of $\tilde G$ in $\Norm_{S^3\times S^3}(\tilde G)=(\O(2)^*/S^1,\O(2)^*/S^1).$ The group $\Isompr(S^3/G)$ is generated by the involution $(j,j)\tilde G$ and by the abelian subgroup of index two $N=(S^1\times S^1)/\tilde G$. Both $\overline{\Phi}_{1,1}$ and $\overline{\Phi}_{j,1}$ commute with the element  $(j,j)G$. The group $N$ is isomorphic to $S^1\times S^1$, but the direct factors of the quotient do not correspond in general to the projections of the direct factors of  the original group $S^1\times S^1$. This fact makes the comprehension of the extension of  $N$ by $\overline{\Phi}_{1,1}$ and $\overline{\Phi}_{j,1}$   more complicated. To represent $N$ as the direct product of two copies of $S^1$ we define an isomorphism 
$\gamma: S^1\times S^1 \to N$
by means of the following construction. Let $$\tilde\gamma:\R\times\R\to N=(S^1\times S^1)/\tilde G \qquad\qquad \tilde\gamma({\alpha},{\beta})=\left(e^{i\left(\frac{\alpha}{2mr}+\frac{\beta}{2m}\right)},e^{i\left(\frac{s\alpha}{2mr}+\frac{(s+1)\beta}{2m}\right)}\right)\tilde G\,.$$
It is easy to check that $\mathrm{Ker}(\tilde\gamma)=2\pi\Z\times2\pi\Z$ and thus $\tilde\gamma$ descends to the isomorphism $\gamma:S^1\times S^1\to N$ which can be defined as 
\begin{equation} \label{isomorfismo gamma} \gamma(e^{i\alpha},e^{i\beta})=\left(e^{i\left(\frac{\alpha}{2mr}+\frac{\beta}{2m}\right)},e^{i\left(\frac{s\alpha}{2mr}+\frac{(s+1)\beta}{2m}\right)}\right)\tilde G\,.
\end{equation}

 Since here we have two subgroups isomorphic to $S^1\times S^1$, we need to distinguish the notation in the two cases: an element of $S^1\times S^1 < S^3 \times S^3$ is denoted by $(e^{i a},e^{i b})$ while an element of $N$ is denoted by $(e^{i a},e^{i b})\tilde G$ if it is seen in the quotient  $(S^1\times S^1)/\tilde G$ or by $((e^{i\alpha},e^{i\beta}))=\gamma(e^{i\alpha},e^{i\beta})$ if $N$ is seen as the direct product of two copies of $S^1.$

\textbf{Suppose  that $s^2\equiv_{r} -1 $.}  The isometry $\overline{\Phi}_{j,1}$ normalizes the group; moreover $\overline{\Phi}_{j,1}$ is of order 4 and $\overline{\Phi}_{j,1}^2=\Phi_{j,j}$. By \cite[Proposition 7.12]{farrel-short} the automorphism group of  $S^1\times S^1$ contains only one conjugacy class of order four, so we have a unique semidirect product $\mathbb{Z}_4\ltimes N$ corresponding to the automorphism which maps $((e^{i\alpha}, e^{i\beta}))$ to $((e^{-i\beta}, e^{i\alpha})).$

\textbf{Suppose that $s^2\equiv_{r} 1 $.}  Here the full isometry group  is a semidirect product of $\Isompr(S^3/G)$ with $\mathbb{Z}_2$. In the automorphism group of $(S^1\times S^1)$ we have three classes of involutions which correspond to non-isomorphic semidirect products. In order to determine to which semidirect product $\Isom(S^3/G)$ is isomorphic, we compute the action by conjugation of $\overline{\Phi}_{1,1}$ on $N.$   The element $(e^{i\alpha},e^{i\beta})\tilde G$ is conjugate by $\overline{\Phi}_{1,1}$ to $(e^{i\beta},e^{i\alpha})\tilde G$. To understand which semidirect product we obtain we will use a procedure introduced in \cite{farrel-short}, and to apply this procedure we need to understand the action of $\bar{\Phi}(1,1)$ on $N$ represented as a direct product of two copies of $S^1.$ By using $\gamma$ the action by conjugation of $\overline{\Phi}_{1,1}$ on $N\cong S^1 \times S^1$ is the following:

$$((e^{i\alpha},e^{i\beta}))\longrightarrow ((e^{i(s^2+s-1)\alpha+i(2s+s^2)r\beta},e^{i\frac{1-s^2}{r}\alpha+i(1-s-s^2)\beta}))\,. $$

By applying  the procedure presented  in the proof of  \cite[Proposition 7.9]{farrel-short}, we obtain that if $r$ is odd the automorphism induced by $\overline{\Phi}_{1,1}$ is conjugate to the following automorphism:

$$((e^{i\alpha},e^{i\beta}))\longrightarrow ((e^{i\beta},e^{i\alpha}))\,,$$
while if $r$ is even $\overline{\Phi}_{1,1}$ is conjugate to the following automorphism:

$$((e^{i\alpha},e^{i\beta}))\longrightarrow ((e^{-i\alpha},e^{i\beta}))\,.$$

\begin{table}
\begin{adjustbox}{width=\textwidth}
\centering
\begin{tabular}{|l|p{8cm}|c||l|c|}
\hline
 $G$  & case & $\Isom(S^3/G)$ & $G$  & $\Isom(S^3/G)$  \\
\hline
 1 &   $m=n,$ $r>2,$ $s^2\equiv_{r} 1 $ and   $r$ odd   &$(\mathbb{Z}_2\times \mathbb{Z}_2) \ltimes_{\alpha_{1}} (S^1\times S^1)$ & 12 & $\Z_2 \times \Z_2$    \\   
1 &  $m=n,$  $r>2,$ $s^2\equiv_{r} 1 $ and $r$ even & $(\mathbb{Z}_2\times \mathbb{Z}_2) \ltimes_{\alpha_{2}} (S^1\times S^1)$ &  20 & $D_8$  \\  
1 & $m=n,$ $r>2$ and $s^2\equiv_{r} -1 $  &$\mathbb{Z}_4 \ltimes_{\alpha_{3}} (S^1\times S^1)$ & 21 & $\Z_2 \times \Z_2$   \\
1 &  $m=n,$ $r=1$ and  $m>1$  &  $\Z_2\ltimes_{\alpha_{4}} (\O(2)\times \O(2))$    &   $21^{\prime}$ &  $(\Z_2)^3$   \\
1 &  $m=n,$ $r=1$ and  $m>1$  &  $\Z_2\ltimes_{\alpha_{4}} (\O(2)\widetilde{\times}\O(2))$ & 22  & $\Z_2 \times D_6$   \\      
1 & $m=n=2$ and $r=1$  &  $\PO(4)$  &  25 & $\Z_2$   \\       
$1^{\prime}$ &  $m=n,$ $r>2,$ $s\equiv_{r} 1$ and   $r$ odd   &$(\mathbb{Z}_2\times \mathbb{Z}_2) \ltimes_{\alpha_{1}} (S^1\times S^1)$   & 26 & $\Z_2$    \\
$1^{\prime}$ & $m=n,$ $r>2,$ $s\equiv_{r} 1$  and $r$ even & $(\mathbb{Z}_2\times \mathbb{Z}_2) \ltimes_{\alpha_{2}}  (S^1\times S^1)$  &  $26^{\prime}$  & $\Z_2 \times \Z_2$    \\
$1^{\prime}$ & $m=n,$ $r>2$ and  $s\equiv_{r} -1$  &$\mathbb{Z}_4 \ltimes_{\alpha_{3}}  (S^1\times S^1)$   &  $26^{\prime\prime}$  & $\Z_2 \times \Z_2$    \\
$1^{\prime}$ & $m=n,$ $r=2$ and  $m>1$  &  $\Z_2\ltimes_{\alpha_{4}} (\O(2)^*\widetilde{\times}\O(2)^*)$ &  27  & $D_6$     \\  
10 & $m=n$ and   $m>2$ & $D_{8}$  & 28 & $\Z_2 \times \Z_2$     \\
10 &  $m=n=2$ & $\Z_2\ltimes_{\alpha_{4}} (D_{6} \times D_{6})$ &  30 & $\Z_2$    \\
11 & $m=n,$ $r>2$ and $s^2\equiv_{r} \pm 1$ &$\Z_2 \times \Z_2$  & 31   & $\Z_2$   \\
11 & $m=n,$ $m>1$ and $r=1$  &$\Z_2 \times D_8$ & $31^{\prime}$ & $\Z_2 \times \Z_2$   \\ 
11 & $m=n,$ $m>1$ and $r=2$  &$(\Z_2)^4$ & $32$  & $\Z_2$   \\  
11 & $m=n=2$ and $r=2$ &$\Z_2 \times O$  & $32^{\prime}$ &  $\Z_2 \times \Z_2$   \\
$11^{\prime}$  & $m=n,$ $r>2,$ $s^2\equiv_{r} 1$,  and $(s^2-1)/r$ even &$\Z_2 \times \Z_2 \times \Z_2$ &  $33$   & $\Z_2 $     \\ 
$11^{\prime}$  & $m=n,$ $r>2,$ $s^2\equiv_{r} 1$ and $(s^2-1)/r$ odd & $D_8$ &   $33^{\prime}$ & $\Z_2 \times \Z_2$     \\ 
$11^{\prime}$  & $m=n,$ $r>2$ and  $s^2\equiv_{r} -1$  & $D_8$ &&  \\
$11^{\prime}$  & $m=n=1$ and $r=4$  &$\Z_2 \times \Z_2 \times D_6$   &&  \\
\hline
\multicolumn{5}{|p{17,3cm}|}{$\alpha_1:$ the group $\Z_2 \times \Z_2$ is generated by $f$ and $g$ s.t. $\alpha_1(f)((e^{i\alpha},e^{i\beta}))=((e^{-i\alpha},e^{-i\beta}))$ and $\alpha_1(g)((e^{i\alpha},e^{i\beta}))=((e^{i\beta},e^{i\alpha})).$} \\
\multicolumn{5}{|p{17,3cm}|}{$\alpha_2:$ the group $\Z_2 \times \Z_2$ is generated by $f$ and $g$ s.t. $\alpha_2(f)((e^{i\alpha},e^{i\beta}))=((e^{-i\alpha},e^{-i\beta}))$ and $\alpha_2(g)((e^{i\alpha},e^{i\beta}))=((e^{-i\alpha},e^{i\beta})).$} \\
\multicolumn{5}{|p{17,3cm}|}{$\alpha_3:$ the group $\Z_4$ is generated by $f$ s.t. $\alpha_3(f)((e^{i\alpha},e^{i\beta}))=((e^{-i\beta},e^{i\alpha})).$}\\
\multicolumn{5}{|p{17,3cm}|}{$\alpha_4:$ $\Z_2$ acts on a product, direct or central, whose  generic element can be represented by a couple $(x,y)$; the non trivial element in $\Z_2$ maps $(x,y)$ to $(y,x)$.}\\
\hline
\end{tabular}
\end{adjustbox}
\bigskip
\caption{Table of full isometry groups when $\Isom(S^3/G)\neq\Isompr(S^3/G)$ }
\label{tableisometryreversing}
\end{table}





\section{Generalized Smale conjecture} \label{sec gen smale}

The purpose of this section is to provide a proof of the $\pi_0$-part of the Generalized Smale Conjecture for spherical compact 3-orbifolds.

\begin{Theorem}[$\pi_0$-part of the Generalized Smale Conjecture for spherical 3-orbifolds]\label{smale conjecture}
Let $\OO$ be any compact three-dimensional spherical orbifold. The inclusion $\Isom(\OO)\rightarrow\Diff(\OO)$ induces a group isomorphism
$$\pi_0 \Isom(\OO)\cong \pi_0 \Diff(\OO)\,.$$
\end{Theorem}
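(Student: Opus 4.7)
The plan is to factor the problem through the outer automorphism group $\Out(G)$ of the orbifold fundamental group $G$. Specifically, I would consider the composition
\[
\alpha\circ\iota\colon \pi_0 \Isompr(\OO)\longrightarrow \pi_0 \Diff^+\!(\OO)\longrightarrow \Out(G),
\]
where $\alpha$ sends the isotopy class of an orientation-preserving diffeomorphism to the outer automorphism induced by lifting it to the universal cover $S^3$ and conjugating $G$ by the lift. The goal is to prove that $\alpha\circ\iota$ is injective, which forces injectivity of $\iota$ on $\pi_0 \Isompr(\OO)$, while surjectivity of $\iota$ at the level of $\pi_0$ is the content of \cite{cuccagna-zimmermann}. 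The orientation-reversing part of the statement then follows by a two-to-one extension argument: an orientation-reversing isotopy class of $\Diff(\OO)$ differs by an orientation-preserving one from any fixed orientation-reversing isometry, the existence of which is decided on the nose by the tables of Section \ref{sec isometry group}.

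Unwinding what $\alpha\circ\iota$ computes on an isometry shows that an orientation-preserving isometry $\phi$ lies in its kernel precisely when it admits a lift $\tilde\phi\in\Norm_{\SO(4)}(G)$ whose conjugation action on $G$ is inner. Multiplying $\tilde\phi$ by a suitable element of $G$, one may then assume $\tilde\phi\in Z_{\SO(4)}(G)$. What must be shown is that under this assumption $\phi$ lies in the identity component $\Isom_0(\OO)$, equivalently that $\tilde\phi$ lies in the identity component of $Z_{\SO(4)}(G)$. Using the description of the normalizer via $\tilde G=(L,L_K,R,R_K,\phi_{\tilde G})$, this reduces to a structural check that exploits the calculations already carried out in Section \ref{sec isometry group}, together with the identification of $\Isom_0(\OO)$ reported in Tables \ref{tableisometry} and \ref{tableisometryexceptions}.

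The argument then splits into cases according to the topology of $\OO$. If the singular locus is empty, then $\OO$ is a spherical $3$-manifold and the statement is McCullough's theorem \cite{mccullough}. If the singular locus is nonempty but the complement of the singular set is \emph{not} a Seifert fibered manifold, the conclusion is contained in \cite{cuccagna-zimmermann}, whose rigidity argument does not require an invariant fibration. The genuinely new case is when the singular set is nonempty and its complement is Seifert fibered; here I would invoke the classification of Seifert fibrations of $\OO=S^3/G$ in Section \ref{sec fibrations} to argue that any orientation-preserving diffeomorphism of $\OO$ is isotopic through $\Diff^+\!(\OO)$ to one preserving a Seifert fibration listed in Table \ref{topolino}. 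A fibration-preserving diffeomorphism inducing the trivial class in $\Out(G)$ is then isotopic through fibration-preserving diffeomorphisms to one whose induced action on the base $2$-orbifold $\mathcal B$ is trivial; by the explicit description of $\Isom_p(\OO)$, $\Isom_f(\OO)$ and $\Isom_0(\OO)$ in Table \ref{topolino}, any such isometry already lies in $\Isom_0(\OO)$.

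The main obstacle is the Seifert fibered case just outlined. Its successful execution requires two ingredients working in tandem: an orbifold version of the uniqueness-up-to-isotopy of Seifert fibrations on the relevant $S^3/G$, so that every diffeomorphism can be isotoped to be fibration-preserving, and a case-by-case inspection of Table \ref{topolino} showing that fibration-preserving isometries whose base action and outer-automorphism class are both trivial lie in $\Isom_0(\OO)$. The first ingredient is where the analysis of Section \ref{sec fibrations} becomes essential: the orbifolds in Families $1,1',11,11'$ admit infinitely many nonequivalent Seifert fibrations, and several smaller families admit finitely many inequivalent ones, so one has to exclude the possibility that two diffeomorphisms are distinguished only by their action on different Seifert fibrations. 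The remaining steps are tabulation checks against the data compiled in Sections \ref{sec isometry group} and \ref{sec fibrations}.
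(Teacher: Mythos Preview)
Your overall architecture matches the paper's: factor through $\alpha\circ\iota:\pi_0\Isompr(\OO)\to\Out(G)$, obtain surjectivity of $\iota$ from \cite{cuccagna-zimmermann} (combined with orbifold geometrization), handle the manifold case by \cite{mccullough}, and in the case of nonempty singular locus split according to whether the complement $M$ is hyperbolic or Seifert fibered, with the hyperbolic case again covered by \cite{cuccagna-zimmermann}. So far this is exactly what the paper does.

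The divergence, and the gap, is in your treatment of the Seifert fibered case. You propose to isotope an arbitrary orientation-preserving \emph{diffeomorphism} to a fibration-preserving one, and you correctly flag that this requires an orbifold uniqueness-up-to-isotopy statement for Seifert fibrations that is delicate precisely for Families $1,1',11,11'$ with infinitely many fibrations. But none of this is needed, because the target is injectivity of $\alpha\circ\iota$, a statement about \emph{isometries}: you must show that an isometry inducing an inner automorphism of $G$ lies in $\Isom_0(\OO)$. The paper never touches diffeomorphism-level fibration arguments here. Instead it proves Lemma~\ref{Seifert-complement}: if $M$ is Seifert fibered then $G$ is conjugate into Families $1$--$9$ (including $1'$) or $34$. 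For exactly these families the isometry group computations in Tables~\ref{tableisometry} and~\ref{tableisometryexceptions} show directly that any orientation-preserving isometry inducing a trivial outer automorphism is isotopic through isometries to the identity, i.e.\ lies in $\Isom_0(\OO)$. That is a finite tabulation check on the isometry side, with no appeal to Table~\ref{topolino} or to isotopy of Seifert structures.

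In short: replace your ``main obstacle'' paragraph by the reduction Lemma~\ref{Seifert-complement} and the direct check on the isometry tables. Your proposed route via fibration-preserving isotopies of diffeomorphisms would work if you could establish the orbifold fibration-uniqueness statement, but that is both unproved in the paper and unnecessary; the paper even remarks that $\alpha\circ\iota$ fails to be injective outside Families $1$--$9$, $1'$, $34$ (e.g.\ Family $31'$), so pinning down the family via Lemma~\ref{Seifert-complement} is really the crux.
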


Before giving the proof, we will need to recall some notions on Seifert fibrations for orbifold, and to prove some preliminary results.

\subsection{Definition of Seifert fibrations for orbifolds}
A Seifert fibration of a 3-orbifold $\mathcal O$ is a projection map $\pi:\mathcal O \rightarrow \mathcal B$, where $\mathcal B$ is a 2-dimensional orbifold, such that for every point $x\in\mathcal  B$ there is an orbifold chart $x\in U\cong \tilde U/\Gamma$, an action of $\Gamma$ on $S^1$ (inducing a diagonal action of $\Gamma$ on  $\tilde U\times S^1$)  and a diffeomorphism $\psi:(\tilde U\times S^1)/\Gamma\rightarrow \pi^{-1}(U)$  which makes the following diagram commute:

\[
\xymatrix{
\pi^{-1}(U) \ar[dr]_-{\pi} & & (\tilde{U}\times S^1)/ \Gamma \ar[ll]_-{\psi} \ar[dl] & \tilde{U}\times S^1 \ar[l] \ar[dl]^-{\mbox{pr}_1} \\
 & U\cong\tilde{U}/ \Gamma  &  \tilde{U} \ar[l] &
}
\]

If we restrict our attention to orientable 3-orbifolds $\mathcal O$, then the action of $\Gamma$ on $\tilde U\times S^1$ needs to be orientation-preserving. In this case, we will consider a fixed orientation both on $\tilde U$ and on $S^1$. Every element of $\Gamma$ may preserve both orientations, or reverse both.

The fibers $\pi^{-1}(x)$ are simple closed curves or intervals. If a fiber projects to a non-singular point of $\mathcal B$, it is called generic. Otherwise we will call it exceptional.

Let us define the local models for an oriented Seifert fibered orbifold. Locally the fibration is given by the curves  induced on the quotient $(\tilde U \times S^1)/\Gamma$ by   the standard fibration of $\tilde U\times S^1$  given by the curves $\{y\}\times S^1$.

If the fiber is generic, it has a tubular neighborhood with a trivial fibration. 
When $x\in \mathcal B$ is a cone point labelled by $q$, the local group $\Gamma$ is a cyclic group of order $q$ acting orientation preservingly on $\tilde U$ and thus it can act on $S^1$ by rotations. Hence a fibered neighborhood of the fiber $\pi^{-1}(x)$ is a fibered solid torus. One can define the \emph{local invariant} of the fiber $\pi^{-1}(x)$ as the ratio $p/q\in\mathbb{Q}/\mathbb{Z}$, where a generator of $\Gamma$ acts on $\tilde U$ by rotation of an angle ${2\pi}/{q}$ and on $S^1$ by rotation of $-{2\pi p}/{q}$ -- however the study of local invariants is not one of the main purposes of this paper.
The fiber $\pi^{-1}(x)$ may be singular (in the sense of orbifold singularities) and the  index of singularity is $\gcd(p,q)$. If $\gcd(p,q)=1$ the fiber is not singular.  Forgetting the singularity of the fiber (if any), the local model  coincides with the  local model of a Seifert fibration for manifolds.  

If $x$ is a corner reflector, namely $\Gamma$ is a dihedral group, then the non-central involutions in $\Gamma$ need to act on $\tilde U$ and on $S^1$ by simultaneous reflections. Here the local model is the so-called solid pillow, which is a topological 3-ball with some singular set inside. There is an index two cyclic subgroup of $\Gamma$, acting as we have previously described. Again, the local invariant associated to $x$ can be defined as the local invariant ${p}/{q}$ of the cyclic index two subgroup, and the fiber $\pi^{-1}(x)$ has singularity index $\gcd(p,q)$. The fibers of $U\times S^1$ intersecting the axes of reflections of $\Gamma$ in $\tilde U$ project to segments that are exceptional fibers of the 3-orbifold; the other fibers of $\tilde U\times S^1$  project to simple closed curves.  



Finally, over mirror reflectors (local group $\mathbb{Z}_2$), we have a special case of the dihedral case. The local model is topologically a 3-ball with two disjoint singular arcs of index 2. More details can be found in  \cite{bonahon-siebenmann} or \cite{dunbar2}.

There is a classification theorem for Seifert fibered 3-orbifolds up to orientation-preserving, fibration-preserving diffeomorphisms by means of some invariants. We won't use this theorem here, so we don't provide details. We only briefly remind that 
an oriented  Seifert fibered orbifold is  determined up to  diffeomorphisms which preserve the orientation and the fibration by the data of the base orbifold, the local invariants associated to cone points and corner reflectors, an additional invariant $\xi\in\mathbb{Z}_2$ associated to each boundary component of the base orbifold and the Euler number. If we change the orientation of the orbifold, then the sign of local invariants and  Euler number are inverted. 
For the formal definitions of Euler number and of invariants associated to boundary components, as well as the complete statement and proof of the classification theorem, we refer again  to \cite{bonahon-siebenmann} or \cite{dunbar2}.

\subsection{Seifert fibrations of $S^3$} \label{sec seifert s3}

Seifert fibrations of $S^3$ are well known: it is proved in \cite{seifert} that, up to an orientation-preserving diffeomorphism, they are  given by  the maps of the form  $\pi:S^3\rightarrow S^2\cong \mathbb{C}\cup\left\{\infty\right\}$ $$\pi(z_1+z_2 j)=\frac{z_1^u}{z_2^v}\qquad\textrm{or}\qquad\pi(z_1+z_2 j)=\frac{\overline{z}_1^u}{z_2^v}$$ for $u$ and $v$ coprimes. We call \textit{standard} the Seifert fibrations given by these maps; the  classification of Seifert fibrations of $S^3$ can thus be rephrased in the following way: each Seifert fibration of $S^3$ can be mapped by an orientation-preserving diffeomorphism of  $S^3$ to a standard one.

The base orbifold of a Seifert fibration of $S^3$ is $S^2$ with two possible cone points. When $u=v=1$, $\pi(z_1+z_2 j)={z_1}/{z_2}$ is called the Hopf fibration. In this case the base orbifold is $S^2$ and all the fibers are generic. 
The projection $\pi:S^3\to S^2$ of the Hopf fibration is also obtained as the quotient by the following $S^1$-action on $S^3$: an element $w=x+iy\in S^1$ acts on $S^3$ simply as left multiplication by $w$. That is, $w\cdot(z_1+z_2j)=(wz_1+wz_2j)$. 

The only other Seifert fibration whose  fibers are all generic is given by $\pi(z_1+z_2 j)={\overline{z}_1}/{z_2}$; we call {\it anti-Hopf} this fibration.


{{It turns out that,}} in Du Val's list, the subgroups of $\SO(4)$ which preserve the Hopf fibration are those with $L=C_{m}$ or $L=D_{2m}^*$, for some $m$. 
The other fibrations of type $\pi(z_1+z_2 j)={\overline{z}_1^u}/{z_2^v}$ are left invariant only by groups in Family 1,1',11,11' and the spherical orbifolds obtained as quotients by these groups have an infinite number of nonisomorphic fibrations. {{See \cite{mccullough} or \cite{mecchia-seppi}.}}
For the remaining fibrations, it suffices to note that the orientation-reversing isometry $\overline{\Phi}_{1,1}$ maps the fibration $\pi(z_1+z_2 j)={\overline{z}_1^u}/{z_2^v}$ to $\pi(z_1+z_2 j)={z_1^u}/{z_2^v}$.  The isometry $\Phi_{l,r}$ preserves a fibration $\pi(z_1+z_2 j)={\overline{z}_1^u}/{z_2^v}$ if and only if $\overline{\Phi}_{1,1}^{-1} \Phi_{l,r}\overline{\Phi}_{1,1}=\Phi_{r,l}$ preserves $\pi(z_1+z_2 j)={z_1^u}/{z_2^v}$.

We remark that finite subgroups in Du Val's list can leave  invariant  also  fibrations that are not  standard, thus obtaining different fibrations on the same spherical orbifold.  The  following Lemma shows that this phenomenon can occur only in some specific cases.

\begin{Lemma} \label{lemma due casi}
Let $G$ be a finite subgroup of $\SO(4)$ leaving invariant a Seifert fibration $\pi$ of $S^3$, then one of the two following conditions is satisfied:
\begin{enumerate}
\item $G$ is conjugate in $\SO(4)$ to a subgroup in Families $1$, $1'$, $11$ or $11'$;
\item there exists  an orientation-preserving  diffeomorphism $f:S^3\to S^3$ such that   $\pi\circ f$ is the Hopf or the anti-Hopf fibration and $f^{-1}Gf$ is a subgroup of $\SO(4).$
\end{enumerate}
\end{Lemma}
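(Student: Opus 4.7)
My plan is to reduce to a standard fibration using Seifert's classification from Subsection~\ref{sec seifert s3}, then straighten the group action into $\SO(4)$ by a fibration-preserving modification, and finally conclude by a case analysis on the type of standard fibration obtained. By Seifert's theorem there is an orientation-preserving diffeomorphism $f_0:S^3\to S^3$ such that $\pi^{*}:=\pi\circ f_0$ is a standard fibration, of the form $(z_1+z_2j)\mapsto z_1^{u}/z_2^{v}$ or $(z_1+z_2j)\mapsto \overline{z}_1^{u}/z_2^{v}$ with $\gcd(u,v)=1$. Since $G$ preserves $\pi$, the conjugate $G_0:=f_0^{-1}Gf_0$ is a finite subgroup of $\Diff^+(S^3)$ preserving the fibration $\pi^{*}$.

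The central technical step is to modify $f_0$ into $f:=f_0\circ h$, where $h$ is an orientation-preserving diffeomorphism of $S^3$ that preserves $\pi^{*}$, in such a way that $f^{-1}Gf\subset\SO(4)$. This amounts to proving that every finite subgroup of the group of orientation-preserving, $\pi^{*}$-preserving diffeomorphisms of $S^3$ is conjugate, inside that group, to a subgroup of $\SO(4)$. This is a fibered analogue of Cerf's theorem, handled by the explicit description of fibration-preserving diffeomorphism groups of Seifert fibered three-orbifolds worked out in \cite{mecchia-seppi}, which the authors cite at the beginning of the section as the source of their techniques. I expect this step to be the main obstacle, since the rest of the argument is essentially a bookkeeping over Du Val's list.

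Once such an $f$ is produced, the proof concludes quickly by a dichotomy on $(u,v)$. If $(u,v)=(1,1)$, then $\pi^{*}$ is the Hopf or anti-Hopf fibration; the map $\pi\circ f$ has the same fibers as $\pi^{*}$ and hence realizes one of these two fibrations, so $f$ witnesses condition~$(2)$. If instead $(u,v)\neq(1,1)$, then $\pi^{*}$ has two exceptional fibers, and the analysis in Subsection~\ref{sec seifert s3} shows that the only finite subgroups of $\SO(4)$ preserving such a fibration belong to Families~$1$, $1'$, $11$, $11'$; hence $f^{-1}Gf$ lies in one of those families. Finally, the orientation-preserving diffeomorphism $f$ descends to a diffeomorphism between the spherical orbifolds $S^3/G$ and $S^3/(f^{-1}Gf)$, which by de~Rham's theorem recalled in Subsection~\ref{sec spherical 3-orbifolds} is realized by an orientation-preserving isometry. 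Lifting this isometry to $\SO(4)$ shows that $G$ is in fact conjugate in $\SO(4)$ to $f^{-1}Gf$, so that $G$ itself is $\SO(4)$-conjugate to a group in Families~$1$, $1'$, $11$, $11'$, establishing condition~$(1)$.
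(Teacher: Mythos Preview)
Your outline is correct and close to the paper's, but you invert the order of two steps in a way that makes the argument heavier than necessary. The paper splits on $(u,v)$ \emph{before} attempting any straightening. When $(u,v)\neq(1,1)$, the original fibration $\pi$ already has two exceptional fibers with distinct invariants; since $G$ is \emph{already} a subgroup of $\SO(4)$ preserving $\pi$, it is a finite group of isometries leaving a circle invariant, hence (by \cite[Lemma~1]{mecchia-zimmermann}) isomorphic to a subgroup of $\mathrm{Dih}(\Z_n\times\Z_m)$, which forces it into Families $1$, $1'$, $11$, $11'$ directly---no straightening, no de~Rham. Only in the Hopf/anti-Hopf case does the paper invoke a straightening result, and there it cites \cite[Theorem~5.1]{morgan-davis} rather than \cite{mecchia-seppi}. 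Your ``central technical step'' (fibered straightening for \emph{every} standard $\pi^{*}$) is thus stronger than what is needed and not obviously supplied by the reference you give; and your appeal to de~Rham at the end is an artifact of having conjugated away from $\SO(4)$ unnecessarily. The paper's ordering buys you a cleaner and more self-contained proof of case~(1).
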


\begin{proof}
The fibration $\pi$ is mapped by an orientation-preserving diffeomorphism to a standard Seifert fibration of $S^3.$ 
We recall that the fibrations  $\pi(z_1+z_2 j)={z_1^u}/{z_2^v}$ and $\pi(z_1+z_2 j)={\overline{z}_1^u}/{z_2^v}$ with $(u,v)\neq (1,1)$ have exactly two exceptional fibers which have different invariants.  
If $\pi$ is mapped to one of these Seifert fibrations, the group $G$ must leave invariant both exceptional fibers. A finite 
group of isometries leaving invariant a simple closed curve is isomorphic to a subgroup of Dih($\mathbb{Z}_n\times \mathbb{Z}_m$), a dihedral 2-extension of an abelian group of rank at most two (see for example \cite[Lemma 1]{mecchia-zimmermann}). The only groups in Du Val's list with this property are in Families 1, $1'$, $11$ or $11'$ and thus case 1 occurs.

Now we can suppose that $\pi$ is mapped  by an orientation-preserving diffeomorphism $g$ either to the Hopf fibration or to the anti-Hopf fibration. 
Let us first consider the case of the Hopf fibration. In this case $G$ is conjugate by  $g$ to $G'$, a finite group of diffeomorphisms of $S^3$ leaving invariant the Hopf fibration. By the proof of \cite[Theorem 5.1]{morgan-davis} we can conjugate  $G'$ to a group of isometries by using a diffeomorphism $h$ which leaves invariant the Hopf fibration. The diffeomorphism $h\circ g$  is the $f$ we are looking for. 
We can reduce the anti-Hopf case to the previous one by conjugating by $\overline{\Phi}_{1,1}.$
\end{proof}

\subsection{Proof of main result} In this subsection, we will provide the proof of Theorem 1. Before that, we need an additional preliminary Lemma. \\

Hence, let $\OO=S^3/G$  be a spherical 3-orbifold, where   $G$ is a finite subgroup of $\SO(4)$. We denote by $\Sigma$ the singular set of $\OO$ and by $M$ the complement of $\overset{\circ}{N}(\Sigma)$ in $\OO$, where $\overset{\circ}{N}(\Sigma)$ is  the interior of a regular neighbourhood of $\Sigma$.

\begin{Lemma}\label{Seifert-complement}
If $M$ is a Seifert fibered manifold, then $G$ is conjugate to one of the groups in Families from 1 to 9 (including $1'$) or in Family 34. 
\end{Lemma}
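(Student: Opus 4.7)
My approach is to show that a Seifert fibration of $M$ extends to an orbifold Seifert fibration of $\OO$, which lifts $G$-equivariantly to $S^3$, and then to invoke the classification of $G$-invariant Seifert fibrations of $S^3$ from Section~\ref{sec fibrations}. If $M$ is a Seifert fibered manifold, I would first argue that $\OO = M \cup N(\Sigma)$ admits a compatible orbifold Seifert fibration. Since $\Sigma$ is a trivalent graph, $\partial N(\Sigma)$ decomposes into tori and spheres with holes, and the circle foliation induced on each boundary component by the fibration of $M$ can be capped off using the standard local Seifert models --- fibered solid torus over a singular circle, fibered solid pillow over a singular arc, and the local model for trivalent vertices from~\cite{bonahon-siebenmann}. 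The resulting Seifert fibration of $\OO$ lifts to a $G$-invariant Seifert fibration of $S^3$.

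By Lemma~\ref{lemma due casi} applied to this lifted fibration, either $G$ lies in Families 1, $1'$, 11, or $11'$, or there is an orientation-preserving diffeomorphism of $S^3$ carrying the fibration to the Hopf or anti-Hopf fibration while keeping $G \subset \SO(4)$. In either case, after possibly conjugating by $\overline{\Phi}_{1,1}$ (which swaps Hopf and anti-Hopf while interchanging $L$ and $R$), we may assume $G$ preserves the Hopf fibration. Hence $G \subset \Norm_{S^3\times S^3}(S^1\times\{1\}) = \O(2)^*\times S^3$, and $L \subset \O(2)^*$ is cyclic or binary dihedral. In particular, Families 20--32 (with $L \in \{T^*,O^*,I^*\}$) are already ruled out, since those groups preserve no Seifert fibration of $S^3$ at all.

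The remaining task is to rule out binary dihedral $L$ when $R$ is also non-cyclic, namely Families 10--19, 33, $33'$ together with the binary dihedral subcases of 11, $11'$. The key observation is that the extended Seifert fibration of $\OO$ must have only circle fibers: an interval fiber of the orbifold fibration would have both endpoints on $\Sigma$ and its interior contained in $M$, but such an open-interval arc in $M$ cannot sit inside a circle fiber of the manifold Seifert fibration of $M$. Hence the base orbifold of the extended fibration has no mirror boundary. However, Table~\ref{topolino} shows that, for $G$ in each of the families to be excluded, every Seifert fibration of $\OO$ induced from an isometric copy of the Hopf fibration has base of the form $D^2$ with nonempty mirror boundary; the additional fibrations available to Families 11, $11'$ must be checked analogously via the classification of Section~\ref{sec fibrations}. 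The resulting contradiction forces $L$ cyclic (possibly after swapping $L$ and $R$ through $\overline{\Phi}_{1,1}$), placing $G$ in Families 1--9 (including $1'$) or Family 34.

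The main obstacle is twofold. First, the extension step must be verified: given only a manifold Seifert fibration of $M$, one has to show that the circle-foliation boundary data on each component of $\partial M$ always admits a compatible capping-off across $N(\Sigma)$ to produce a genuine orbifold Seifert fibration of $\OO$. Second, for $G$ in Families 11, $11'$ one must confirm that no $G$-invariant Seifert fibration of $S^3$ descends to an $\OO$-fibration whose base is free of mirror boundary; this requires inspecting all base orbifolds arising in the classification of~\cite{mecchia-seppi} and reducing them to the list in Table~\ref{topolino}.
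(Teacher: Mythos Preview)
Your overall strategy is close to the paper's, but there is a genuine gap at the extension step. You claim that the Seifert fibration of $M$ can always be capped off across $N(\Sigma)$ to an orbifold Seifert fibration of $\OO$, and you flag this as an ``obstacle to be verified.'' In fact it is \emph{false} in general. The paper first observes that $\Sigma$ must be a link (since $\partial M$ consists of tori when $M$ is Seifert), then lifts to $S^3\setminus \overset{\circ}{N}(\tilde\Sigma)$ and invokes Burde--Murasugi \cite{burde-murasugi}: a Seifert fibration of a link complement in $S^3$ extends to $S^3$ \emph{except} when $\tilde\Sigma$ is one specific link (the ``keychain''-type configuration in Figure~\ref{singular-link}). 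In that exceptional case the fibers on some boundary torus are meridians, so no fibered solid torus --- ordinary or orbifold --- can be glued in. Since extending in $\OO$ and then lifting is equivalent to lifting first and then extending $G$-equivariantly in $S^3$, your capping-off step fails precisely in this case, and your argument never reaches the base-orbifold analysis there.

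The paper handles this residual case by a direct structural argument: the component $\Sigma_0$ of the exceptional link is fixed setwise by a cyclic or bicyclic subgroup, placing that stabilizer in Families $1$ or $1'$; either $G$ equals this stabilizer, or it is an index-two extension swapping the two components when $n=1$, which lands $G$ in Families $2$, $3$, $4$ or $34$. Your proposal needs this separate Case~2 analysis. As a minor point, since $\Sigma$ is forced to be a link, the solid-pillow and trivalent-vertex local models you mention never arise; this is worth noting at the outset, as the paper does. Apart from the missing exceptional case, your treatment of the generic situation (ruling out mirror boundary via Table~\ref{topolino}, and the $(u,v)\neq(1,1)$ fibrations for Families $11$, $11'$) matches the paper's Case~1.
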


\begin{proof}

Since $M$ has a Seifert fibration for manifolds, then $\Sigma$ is a link.

Lift the fibration of $M$ to $S^3\setminus \overset{\circ}{N}(\tilde{\Sigma})$ where $\tilde{\Sigma}$ is the preimage in $S^3$ of $\Sigma$ under the projection $S^3\rightarrow S^3/G$ and $\overset{\circ}{N}(\tilde{\Sigma})$ is the interior of a regular neighbourhood of $\tilde{\Sigma}$;  we note that $\tilde{\Sigma}$  is the set of points in $S^3$ that are fixed by a non-trivial element of $G$. 
The fibration of $S^3\setminus \overset{\circ}{N}(\tilde{\Sigma})$ extends to a fibration of $S^3$ invariant under the action of $G$ unless  $\tilde{\Sigma}$ is the link in Figure~\ref{singular-link} (see the proof of \cite[Theorem 1]{burde-murasugi}).

\begin{figure}[htb]
\begin{center}
\includegraphics[height=4cm]{sifert}
\caption{}\label{singular-link}
\end{center}
\end{figure}

\textit{Case 1: the fibration of $S^3\setminus \overset{\circ}{N}(\tilde{\Sigma})$ extends.} In this case the Seifert fibration of $M$ is induced by a Seifert fibration of $S^3$ invariant under the action on $G$ and extends to  $\OO$.  We remark that the fibration on $M$ is a classical Seifert fibration for manifold, while the fibration on $\OO$ is  a Seifert fibration in an orbifold sense. The base orbifold of $\OO$ contains mirror reflectors or corner reflectors if and only if the fibration of $\OO$ contains infinite fibers that are not closed curves (they are arcs). Hence the fibration of $\OO$ induces a  Seifert fibration on the manifold $M$ when  the base orbifold has only cone points as singularities.  


Lemma \ref{lemma due casi} ensures that the spherical orbifolds only have {{fibrations induced by an isometric copy of the Hopf fibration (including the anti-Hopf fibration),}} except for groups which belong to Families $1$, $1'$, $11$, $11'$ up to conjugation.  
We recall that the fibrations of $S^3$ of the form $z_1^u/z_2^v$ or $\bar z_1^u/z_2^v$, with $(u,v)\neq (1,1),$ have two exceptional fibers, each left invariant by the group. For groups in Families $11$, $11'$, there must be an involution which acts as a reflection on the exceptional fibers, and therefore the base orbifold of the quotient contains corner reflectors. 
Finally, by analyzing {{Table 2, Table 3 and Table 4 provided in \cite{mecchia-seppi}}}, one sees that if a spherical orbifold $S^3/G$ has (at least) one Seifert fibration with only cone singular points in the base orbifold, then it is conjugate to a group in Families 1 to 9 (including $1'$) or in Family 34. 

\medskip

\textit{Case 2: $\tilde{\Sigma}$ is the link in Figure~\ref{singular-link}}. In this case $\Sigma_0$ is the fixed point set of a non-trivial element of $G$. Since $\Sigma$ admits no singular point with dihedral local group, the subgroup of $G$ leaving invariant $\Sigma_0$ is cyclic or the direct product of two cyclic groups (see for instance \cite[Lemma 1]{mecchia-zimmermann}) and can be conjugated to a group in Familiy 1 or $1'$.  If $n\neq 1$, the whole group $G$ leaves invariant $\Sigma_0$ and we are done. In the case of $n=1$, $G$ contains a subgroup $G_0$ of index at most two leaving invariant $\Sigma_0$. If $G=G_0$, then  the group  can be conjugated to a group in Familiy 1 or $1'$. Otherwise the elements in $G\setminus G_0$ exchange $\Sigma_0$ with $\Sigma_1$. The groups having these properties are in Families  2, 3, 4 and 34, {{see again \cite{mecchia-seppi}.}}
\end{proof}

We are now ready to give the proof of our main result.

\begin{proof}[Proof of Theorem 1]

 


We denote by $\iota: \pi_0 \Isom(\OO) \rightarrow \pi_0 \Diff(\OO) $ the homomorphism induced by the inclusion  $\Isom(\OO)\rightarrow\Diff(\OO).$

For spherical manifolds the theorem was proved in \cite{mccullough}. Therefore we can suppose that $\Sigma$ is not empty and we can apply the results in \cite{cuccagna-zimmermann}  where the authors proved   the existence of  a finite subgroup of diffeomorphisms $H$ such that the standard projection  $\Diff(\OO) \rightarrow \pi_0 \Diff(\OO) $ restricted to $H$ is surjective. As a consequence of the Thurston Orbifold Geometrization Theorem (see \cite{boileau-leeb-porti} and \cite{dinkelbach-leeb}), we can suppose that $H$ is a group of isometries of $\OO$, and we can conclude that also $\iota$ is surjective.

If $\Sigma$ is a link, then \cite[Theorem 2]{gordon-litherland} implies directly that $M=\OO\setminus \overset{\circ}{N}(\Sigma)$  is  irreducible and atoroidal (i.e. each incompressible torus is boundary parallel). Indeed the argument used in the  proof of \cite[Theorem 2]{gordon-litherland} works also if $\Sigma$ is not a link, so we have that in any case $M$ is irreducible and atoroidal. 
By  the geometrization of 3-manifolds with non-empty boundary (see for example \cite[Proposition 3]{shalen}) we obtain that $M$ is either hyperbolic or Seifert fibered.  

If $M$ is hyperbolic, then  by \cite[Theorem 1]{cuccagna-zimmermann}  the homomorphism $\iota$ is also injective and we are done. We  can suppose that $M$ is Seifert fibered and by Lemma~\ref{Seifert-complement} the group $G$ is  in one of the Families from 1 to 9 (including $1'$) or in Family $34$.


An orientation-reversing isometry cannot be homotopic to the identity, hence to prove that  $\iota$ is injective we can reduce to the orientation-preserving case. We denote by $\Diff^+\!(\OO)$ the group of orientation-preserving diffeomorphism of $\OO$.

From now on we will identify the orbifold fundamental group $\pi_1(\OO)$ with $G$.
We denote by $\Out(G)$ the outer automorphism group, namely the quotient of the group of automorphisms of $G$ by the normal subgroup of inner automorphisms. By using the properties of the universal covering orbifolds (see for example \cite[Sections 4.6 and 4.7]{choi}), we will define a homomorphism $\beta:\Diff^+\!(\OO) \rightarrow \Out(G)$. If $f\in \Diff^+\!(\OO)$, then $f$ can be lifted to a diffeomorphism $\tilde f$ of $S^3$ that normalizes $G$ and by conjugation induces an automorphism of $G$. Different choices of the lifting can give different automorphisms, but they  coincide up to the composition with an inner automorphism. Thus we can define $\beta(f)$ as the outer automorphism induced by a lift of $f$. Since an element of $\Diff^+\!(\OO)$ isotopic to the identity lifts to a diffeomorphism of $S^3$ which is $G$-equivariantly isotopic to the identity, an element of $\Diff^+_0\!(\OO) $ induces a trivial   automorphism on $G.$ Thus $\Diff^+_0\!(\OO) $ is contained in the kernel of $\beta$ and we obtain an induced  homomorphism $\pi_0\Diff(\OO) \rightarrow \Out(G)$ that we denote by $\alpha.$

Therefore we have the following  composition of group homomorphisms
\[
\xymatrix{
\pi_0\Isompr(\OO) \ar[r]^-{\iota} & \pi_0\Diff^+\!(\OO) \ar[r]^-{\alpha} & \Out(G)\,.
}
\]

{{We claim that, when $\OO=S^3/G$ and $G$ is in Families 1-9 and 34 (see Tables~\ref{tableisometry} and ~\ref{tableisometryexceptions}),  the homomorphism $\alpha \circ \iota$ is injective, with the exception of two groups: $(C_4/C_1,C_4/C_1)$ and $(C_4/C_2,C_4/C_2).$ 
For clarity, we give a proof of this claim in a separate lemma (Lemma \ref{lemma trivial action} below). If $\alpha\circ\iota$ is injective, then  $\iota$ is injective, and thus this will conclude the proof for all groups but the two exceptional ones.

We consider now the two remaining cases $(C_4/C_1,C_4/C_1)$ and $(C_4/C_2,C_4/C_2)$. For both groups there exists exactly one non-trivial element  of $\pi_0\Isompr(\OO)$ inducing a trivial outer automorphism of $G$; this is induced by the element $(j,j)\in \Norm_{S^3\times S^3}(\tilde G)$. In both cases $\OO$ has a knot as singular set (see \cite[Table 2 and 3]{mecchia-seppi}). The isometry of $\OO$ induced by $(j,j)$ acts as a reflection  on the singular knot (see \cite[page 831]{mecchia-seppi} where the Families 11 and $11'$ are analyzed).
This implies that  the isometry induced by $(j,j)$ can not be  isotopic to the identity  in $\Diff^+\!(\OO)$; in fact, if it was, restricting the isotopy to the singular set of $\OO$, one would obtain an isotopy between a reflection of $S^1$ and the identity, and this is impossible. We can conclude also for these two groups that the homomorphism $\iota$ is injective.
}}
\end{proof}

\begin{Lemma} \label{lemma trivial action}
Let $G$ be in Families 1-9 and 34  and let $\OO=S^3/G$. If $G$ is not isomorphic to  $(C_4/C_1,C_4/C_1)$ and $(C_4/C_2,C_4/C_2)$, then $\alpha\circ\iota$ is injective.
\end{Lemma}

\begin{proof}


Let us first consider the  groups  in Table~\ref{tableisometry} (where the small indices are excluded).

In the families $1$ and $1'$ the non trivial element in $\pi_0\Isompr(\OO)$  is induced by $(j,j)\in \Norm_{S^3\times S^3}(\tilde G).$ The automorphism induced by $(j,j)$ on $G$ sends each element to its inverse. Since $G$ is an abelian group, the automorphism  induced by $(j,j)$  is non-trivial, hence non-inner, unless all the elements of the group have order $2$. Moreover, since the groups $G$ in the families $1$ and $1'$ are generated by two elements, this can happen only if $G$ has order $2$ or $4$. In Table~\ref{tableisometry} we consider $C_{2m}\neq C_2$ and $r>2$, therefore the only groups of order $4$ or $2$ are $(C_4/C_1,C_4/C_1)$ and $(C_8/C_1,C_8/C_1)$. The latter is cyclic of order four, so again the  induced automorphism in not trivial. The only exceptional case here is $(C_4/C_1,C_4/C_1).$

We consider now  Families 2-9 and  Group 34. Each element of  $\pi_0\Isompr(\OO)$  is induced by one element in $\Norm_{S^3\times S^3}(\tilde G)$ of one of these three types: $(j,1)$, $(1,f)$ or $(j,f)$ where $f$ is an element of $\Norm_{S^3}(R)$ inducing a non-inner automorphism on $R/C_2$, the quotient of $R$ by the center of $S^3.$

Let $\alpha$  an element of $\tilde G$ of type  $(1,f)$ or $(j,f)$; the element  $\alpha$  induces by conjugation an automorphism on the quotient $G/\Phi(L_K\times \{1\})$. We can construct an isomorphism between  $G/\Phi(L_K\times \{1\})$ and $R/C_2$ sending $\Phi(l,r)$ to $r;$ the automorphism induced by $\alpha$ corresponds through this isomorphism to the automorphism induced by $f$ on  $R/C_2$, which is non-inner. This implies that the automorphism induced by $\alpha$ in $G$ is not inner.

Let us now consider the elements of type $(j,1)$. Since we are restricting (for the moment) to Families 2-9 and  Group 34 in Table~\ref{tableisometry} (thus excluding small indices), in each group there exists an element of type $\Phi(l,1)$ where $l$ has order at least four. 
The automorphism induced by  $(j,1)$ maps $\Phi(l,1)$ to $\Phi(l^{-1},1)$. Since $(l,1)\neq\pm (l^{-1}, 1)$ the action of $(j,1)$ on $\Phi(l,1)$ is not trivial. Therefore the automorphism on $G$ can not be inner, because $\Phi(l,1)$ is in the center of $G$.

The analysis of the groups with small indices  that are listed  in Table~\ref{tableisometryexceptions} is similar. For Families 2, 5, 7 and 9 of Table~\ref{tableisometryexceptions}, the proof is exactly the same as above, with the additional remark that if $L=C_2$, non-trivial elements of  $\pi_0\Isompr(\OO)$ cannot be induced by $(j,1)$ since $(j,1)$ is isotopic to the identity in $\Norm_{S^3\times S^3}(\tilde G)$. 

For the small indices of Families $1$ and $1'$, the non-trivial elements of $\pi_0\Isompr(\OO)$ are induced by one of the following three cases: $(j,j)$, $(j,1)$ and $(1,j)$. The case $(j,1)$ is treated exactly as above for Families 2-9 and  Group 34. In fact, one sees directly that $(j,1)$ is not isotopic to the identity only when $L$ has order at least four. The case $(1,j)$ is completely analogous by switching the roles of $L$ and $R$. The case of $(j,j)$ follows the same argument at the beginning of the proof, and
it turns out that the only group of order $2$ or $4$ in which $(j,j)$ is not isotopic to the identity is $(C_4/C_2,C_4/C_2).$
\end{proof}

This concludes the proof of Theorem 1. We conclude by remarking that the restriction to Families 1-9 and 34 is essential. 
In fact, besides the exceptions $(C_4/C_1,C_4/C_1)$ and $(C_4/C_2,C_4/C_2)$ of Lemma \ref{lemma trivial action}, in several other cases $\alpha\circ\iota$ is not injective, even in the orientation-preserving case.  For example we suppose that $G$ is the group  $31'$ of Table~\ref{subgroup}; its normalizer in $\SO(4)$ is the group $31$ (see Case 6 in Subsection~\ref{o-p i}). The group  $G$ is isomorphic to the alternating group on five letters and its normalizer  is isomorphic to $G\times \mathbb{Z}_2$. Each isometry of $S^3/G$ induces  a trivial action on $G$. We note that  $\Out(G)$ is not trivial, in fact is isomorphic to  $\mathbb{Z}_2.$
Similarly, $\alpha\circ\iota$ fails to be injective also for the infinite family $11'$.





\bibliographystyle{alpha}
\bibliography{ms-bibliography}

\begin{thebibliography}{HKMR12}

\bibitem[BLP05]{boileau-leeb-porti}
Michel Boileau, Bernhard Leeb, and Joan Porti.
\newblock Geometrization of 3-dimensional orbifolds.
\newblock {\em Ann. of Math. (2)}, 162(1):195--290, 2005.

\bibitem[BM70]{burde-murasugi}
Gerhard Burde and Kunio Murasugi.
\newblock Links and {S}eifert fiber spaces.
\newblock {\em Duke Math. J.}, 37:89--93, 1970.

\bibitem[BMP03]{boileau-maillot-porti}
Michel Boileau, Sylvain Maillot, and Joan Porti.
\newblock {\em Three-dimensional orbifolds and their geometric structures},
  volume~15 of {\em Panoramas et Synth\`eses [Panoramas and Syntheses]}.
\newblock Soci\'et\'e Math\'ematique de France, Paris, 2003.

\bibitem[BS85]{bonahon-siebenmann}
Francis Bonahon and Laurent Siebenmann.
\newblock The classification of {S}eifert fibred {$3$}-orbifolds.
\newblock In {\em Low-dimensional topology ({C}helwood {G}ate, 1982)},
  volume~95 of {\em London Math. Soc. Lecture Note Ser.}, pages 19--85.
  Cambridge Univ. Press, Cambridge, 1985.

\bibitem[Cer68]{cerf}
Jean Cerf.
\newblock {\em Sur les diff\'eomorphismes de la sph\`ere de dimension trois
  {$(\Gamma _{4}=0)$}}.
\newblock Lecture Notes in Mathematics, No. 53. Springer-Verlag, Berlin-New
  York, 1968.

\bibitem[Cho12]{choi}
Suhyoung Choi.
\newblock {\em Geometric structures on 2-orbifolds: exploration of discrete
  symmetry}, volume~27 of {\em MSJ Memoirs}.
\newblock Mathematical Society of Japan, Tokyo, 2012.

\bibitem[CS03]{conway-smith}
John~H. Conway and Derek~A. Smith.
\newblock {\em On quaternions and octonions: their geometry, arithmetic, and
  symmetry}.
\newblock A K Peters, Ltd., Natick, MA, 2003.

\bibitem[CZ92]{cuccagna-zimmermann}
Scipio Cuccagna and Bruno Zimmermann.
\newblock On the mapping class group of spherical {$3$}-orbifolds.
\newblock {\em Proc. Amer. Math. Soc.}, 116(2):561--566, 1992.

\bibitem[DL09]{dinkelbach-leeb}
Jonathan Dinkelbach and Bernhard Leeb.
\newblock Equivariant {R}icci flow with surgery and applications to finite
  group actions on geometric 3-manifolds.
\newblock {\em Geom. Topol.}, 13(2):1129--1173, 2009.

\bibitem[DM84]{morgan-davis}
Michael~W. Davis and John~W. Morgan.
\newblock Finite group actions on homotopy {$3$}-spheres.
\newblock In {\em The {S}mith conjecture ({N}ew {Y}ork, 1979)}, volume 112 of
  {\em Pure Appl. Math.}, pages 181--225. Academic Press, Orlando, FL, 1984.

\bibitem[dR64]{derham}
Georges de~Rham.
\newblock Reidemeister's torsion invariant and rotations of {$S^n$}.
\newblock In {\em Differential {A}nalysis, {B}ombay {C}olloq}, pages 27--36.
  Oxford Univ. Press, London, 1964.

\bibitem[Dun81]{dunbar2}
William~Dart Dunbar.
\newblock {\em F{ibered} {orbifolds} {and} {crystallographic} {groups}}.
\newblock ProQuest LLC, Ann Arbor, MI, 1981.
\newblock Thesis (Ph.D.)--Princeton University.

\bibitem[Dun88]{Dun2}
William~D. Dunbar.
\newblock Geometric orbifolds.
\newblock {\em Rev. Mat. Univ. Complut. Madrid}, 1(1-3):67--99, 1988.

\bibitem[Dun94]{dunbar}
William~D. Dunbar.
\newblock Nonfibering spherical {$3$}-orbifolds.
\newblock {\em Trans. Amer. Math. Soc.}, 341(1):121--142, 1994.

\bibitem[DV64]{duval}
Patrick Du~Val.
\newblock {\em Homographies, quaternions and rotations}.
\newblock Oxford Mathematical Monographs. Clarendon Press, Oxford, 1964.

\bibitem[GL84]{gordon-litherland}
Cameron~McA. Gordon and Richard~A. Litherland.
\newblock Incompressible surfaces in branched coverings.
\newblock In {\em The {S}mith conjecture ({N}ew {Y}ork, 1979)}, volume 112 of
  {\em Pure Appl. Math.}, pages 139--152. Academic Press, Orlando, FL, 1984.

\bibitem[Hae84]{haefliger}
Andr{\'e} Haefliger.
\newblock Groupo\"\i des d'holonomie et classifiants.
\newblock {\em Ast\'erisque}, (116):70--97, 1984.
\newblock Transversal structure of foliations (Toulouse, 1982).

\bibitem[Hat83]{hatcher}
Allen~E. Hatcher.
\newblock A proof of the {S}male conjecture, {${\rm Diff}(S^{3})\simeq {\rm
  O}(4)$}.
\newblock {\em Ann. of Math. (2)}, 117(3):553--607, 1983.

\bibitem[HKMR12]{MR2976322}
Sungbok Hong, John Kalliongis, Darryl McCullough, and J.~Hyam Rubinstein.
\newblock {\em Diffeomorphisms of elliptic 3-manifolds}, volume 2055 of {\em
  Lecture Notes in Mathematics}.
\newblock Springer, Heidelberg, 2012.

\bibitem[McC02]{mccullough}
Darryl McCullough.
\newblock Isometries of elliptic 3-manifolds.
\newblock {\em J. London Math. Soc. (2)}, 65(1):167--182, 2002.

\bibitem[MS15]{mecchia-seppi}
Mattia Mecchia and Andrea Seppi.
\newblock Fibered spherical 3-orbifolds.
\newblock {\em Rev. Mat. Iberoam.}, 31(3):811--840, 2015.

\bibitem[MZ06]{mecchia-zimmermann}
Mattia Mecchia and Bruno Zimmermann.
\newblock On finite simple groups acting on integer and mod 2 homology
  3-spheres.
\newblock {\em J. Algebra}, 298(2):460--467, 2006.

\bibitem[OS15]{farrel-short}
Anthony~G. O'Farrell and Ian Short.
\newblock {\em Reversibility in dynamics and group theory}, volume 416 of {\em
  London Mathematical Society Lecture Note Series}.
\newblock Cambridge University Press, Cambridge, 2015.

\bibitem[Rat06]{ratcliffe}
John~G. Ratcliffe.
\newblock {\em Foundations of hyperbolic manifolds}.
\newblock Graduate texts in mathematics. Springer, New York, 2006.

\bibitem[Sak90]{sakuma}
Makoto Sakuma.
\newblock The geometries of spherical {M}ontesinos links.
\newblock {\em Kobe J. Math.}, 7(2):167--190, 1990.

\bibitem[Sat56]{satake}
Ichir\^o Satake.
\newblock On a generalization of the notion of manifold.
\newblock {\em Proc. Nat. Acad. Sci. U.S.A.}, 42:359--363, 1956.

\bibitem[Sco83]{scott}
Peter Scott.
\newblock The geometries of {$3$}-manifolds.
\newblock {\em Bull. London Math. Soc.}, 15(5):401--487, 1983.

\bibitem[Sei80]{seifert}
Herbert Seifert.
\newblock Topology of 3-dimensional fibered spaces.
\newblock In {\em A textbbok of topology ({N}ew {Y}ork- {L}ondon, 1980)},
  volume~89 of {\em Pure Appl. Math.}, pages 139--152. Academic Press, New York
  - London, 1980.

\bibitem[Sha84]{shalen}
Peter Shalen.
\newblock The proof in the case of no incompressible surface.
\newblock In {\em The {S}mith conjecture ({N}ew {Y}ork, 1979)}, volume 112 of
  {\em Pure Appl. Math.}, pages 21--36. Academic Press, Orlando, FL, 1984.

\bibitem[Thu97]{thurston2}
William~P. Thurston.
\newblock {\em Three-dimensional geometry and topology. {V}ol. 1}, volume~35 of
  {\em Princeton Mathematical Series}.
\newblock Princeton University Press, Princeton, NJ, 1997.
\newblock Edited by Silvio Levy.

\bibitem[TS31]{threlfall-seifert}
William Threlfall and Herbert Seifert.
\newblock Topologische {U}ntersuchung der {D}iskontinuit\"atsbereiche endlicher
  {B}ewegungsgruppen des dreidimensionalen sph\"arischen {R}aumes.
\newblock {\em Math. Ann.}, 104(1):1--70, 1931.

\bibitem[TS33]{threlfall-seifert2}
William Threlfall and Herbert Seifert.
\newblock Topologische {U}ntersuchung der {D}iskontinuit\"atsbereiche endlicher
  {B}ewegungsgruppen des dreidimensionalen sph\"arischen {R}aumes ({S}chlu\ss).
\newblock {\em Math. Ann.}, 107(1):543--586, 1933.

\end{thebibliography}

\end{document}